\DeclareMathOperator{\dive}{div}
\DeclareMathOperator{\dist}{dist}
\def\ds{\displaystyle}
\def\eps{{\varepsilon}}
\def\N{\mathbb{N}}
\def\R{\mathbb{R}}
\def\O{\Omega}
\def\F{\mathcal{F}}
\def\HH{\mathcal{H}}
\newcommand{\be}{\begin{equation}}
\newcommand{\ee}{\end{equation}}
\newcommand{\ep}{\varepsilon}
\numberwithin{equation}{section}
\theoremstyle{plain}
\newtheorem{teo}{Theorem}[section]
\newtheorem{lemma}[teo]{Lemma}
\newtheorem{prop}[teo]{Proposition}
\theoremstyle{remark}
\title[]{Sharp estimates for the anisotropic torsional rigidity and the principal frequency}
\author{Giuseppe Buttazzo}
\address{Dipartimento di Matematica, Universit\`a di Pisa, Largo B. Pontecorvo 5, 56126 Pisa, ITALY}
\email{buttazzo@dm.unipi.it}
\author{Serena Guarino Lo Bianco}
\address{Dipartimento di Matematica e Applicazioni� Renato Caccioppoli� Universit\`a degli Studi di Napoli Federico II,Via Cintia, Monte S. Angelo,80126 Napoli, ITALY}
\email{serena.guarinolobianco@unina.it}
\author{Michele Marini}
\address{Scuola Internazionale Superiore di Studi Avanzati, via Bonomea 265, 34136 Trieste, ITALY}
\email{mmarini@sissa.it}
\begin{document}

\maketitle
\begin{abstract}
In this paper we generalize some classical estimates involving the torsional rigidity and the principal frequency of a convex domain to a class of functionals related to some anisotropic non linear operators.
\end{abstract}

\medskip
\textbf{Keywords:} torsional rigidity, shape optimization, principal eigenvalue, convex domains

\medskip
\textbf{2010 Mathematics Subject Classification:} 49Q10, 49J45, 49R05, 35P15, 35J25

\section{Introduction}\label{sintro}

Let $h_K$ be the norm associated to a convex body $K$ (see Section \ref{prel} for more details); given a domain $\O\subset\R^N$ with finite measure, we define the $K$-principal frequency, $\lambda_1^K$, and the $K$-torsional rigidity, $T^K$, as 
\be\label{lambda1}
\lambda_1^K(\O)=\min_{u\in W_0^{1,2}(\O)\setminus\{0\}}\frac{\int_\O h^2_K(\nabla u)\,dx}{\int_\O u^2\,dx}\;,
\ee
and
\be\label{torsion}
T^K(\O)=\max_{u\in W_0^{1,2}(\O)\setminus\{0\}}\frac{\Big(\int_\O u\,dx\Big)^2}{\int_\O h^2_K(\nabla u)\,dx}\;.
\ee
It is convenient to introduce the function $H_K=h^2_K/2$; when $H_K$ is sufficiently smooth, we can write the {\it Euler-Lagrange equations} for the minimizers of the problems \eqref{lambda1} and \eqref{torsion} to get a PDE interpretation of the above quantities. Indeed, the $K$-principal frequency is related to the eigenvalue problem 
\be\label{lambdapde}
-\Delta_Ku=\lambda_1^Ku\mbox{ in }\O,\qquad u=0\mbox{ on }\partial\O,
\ee
while the $K$-torsional rigidity is the $L^1$ norm of the solution $u$ of:
\be\label{torsionpde}
-\Delta_Ku=1\mbox{ in }\O,\qquad u=0\mbox{ on }\partial\O.
\ee
Here $\Delta_K$ denotes the {\it Finsler-Laplace operator} given by
\be\label{Klap}
\Delta_K u = \dive (DH_K(\nabla u)).
\ee
In the Euclidean case, occurring when $K$ is the unitary ball $B$, (and $h_K(x)=|x|$) the operator given in \eqref{Klap} coincides with the Laplacian and $\lambda_1$ and $T$ are the usual first Dirichlet eigenvalue and torsional rigidity.

As in the linear case, the quantities defined in \eqref{lambda1} and \eqref{torsion} are monotone, in opposite sense, with respect to the set inclusion, {\it i.e.} if $\O_1\subset\O_2$ then
\be\label{monotonicity}
\lambda_1^K(\O_1)\ge\lambda_1^K(\O_2)\quad \mbox{and}\quad T^K(\O_1)\le T^K(\O_2).
\ee
Moreover, since $H_K$ is a homogeneous function of degree 2, the following scalings hold true:
\be\label{scaling}
\lambda_1^K(t\O)=t^{-2}\lambda_1^K(\O)\qquad\mbox{and}\qquad T^K(t\O)=t^{N+2}T^K(\O),\quad t>0.
\ee

Shape optimization problems involving $\lambda_1$ and $T$, or even more general spectral functionals of the form $\F(\O)=\Phi(\lambda_1(\O),T(\O))$, are widely studied in the literature (see for instance \cite{bfnt}, \cite{bra14}, \cite{bucbut}, \cite{but}, \cite{buvdbve}, \cite{hen}, \cite{henpie}) and, as it is well known, it is possible to get both lower and upper bounds for the principal frequency and the torsional rigidity in terms of quantities associated to the geometry of the domain $\O$, such as, for instance the perimeter and the volume (just think to the {\it Faber-Krahn inequality} and the {\it Saint-Venant theorem}, see for instance the recent book \cite{hen}).

As it should not be unexpected, if we impose some further constraints in the class of admissible domains, we can get stronger estimates. The class of convex domains, for instance, has been considered by several authors: on one hand the {\it a priori} assumption of the convexity of the domain naturally arises in many situations; on the other, the class of convex sets has strong compactness properties which ensure the existence of extremal domains for a great number of geometric inequalities.

In this paper we are interested in estimates of the principal frequency and the torsional rigidity of a convex domain in terms of the {\it inradius}, $R_\O$, {\it i.e.} the radius of the biggest ball contained in $\O$.

An immediate consequence of \eqref{monotonicity} and \eqref{scaling} is that, for the Euclidean case
\be\label{easyest}
\lambda_1(\O)\le \lambda_1(B_{R_\O})=\lambda_1(B_1) R_\O^{-2}.
\ee
A classical result by J. Hersch (see \cite{her}) shows that for any convex domain $\O\subset\R^2$ it holds
\be\label{hersch}
\frac{\pi^2}{4}R_\O^{-2}\le\lambda_1(\O),
\ee
and the inequality is sharp: if we allow unbounded domains, equality case occurs when $\O$ is a strip, otherwise it is reached only asymptotically, by a sequence of rectangles with sides $a\ll b$. Hersch's technique has been extended to convex domains of $\R^N$ by M. H. Protter in \cite{pro} who proved the validity of \eqref{hersch} in every dimension.

Concerning the torsional rigidity, in \cite{polsze} Polya and Szego proved that, for any domain $\O\subset\R^2$, the following inequality holds true
\be\label{p-s}
\frac{T(\O)}{|\O|} \ge \frac{1}{8}R_\O^2,
\ee
where $|\O|$, denotes the {\it Lebesgue measure} of $\O$.
Again, this inequality is sharp, becoming an equality if $\O$ is a ball.

An upper bound for the torsional rigidity was obtained by E. Makai in \cite{mak}, who proved that, for every convex domain $\O\subset\R^2$, it holds true that
\be\label{makeq}
\frac{T(\O)}{|\O|} \le \frac{1}{3}R_\O^2.
\ee
Inequality \eqref{makeq} is also sharp, and the best constant is achieved, again, if we consider a sequence of rectangles with sides $a\ll b$.

The aim of this paper is to extend to the anisotropic case and to a general dimension the estimates \eqref{easyest}, \eqref{hersch}, \eqref{p-s} and \eqref{makeq}. By virtue of \eqref{monotonicity}, and \eqref{scaling} it seems reasonable to find estimates of the form
$$c(N,K)(R_\O^K)^{-2}\le \lambda_1^K(\O)\le C(N,K)(R_\O^K)^{-2}$$
and
$$\overline c(N,K)(R_\O^K)^{2}\le \frac{T^K(\O)}{|\O|}\le \overline C(N,K)(R_\O^K)^{2},$$
where $R_\O^K$ is the {\it anisotropic inradius}, {\it i.e.} the largest number $t$ such that $x+tK\subseteq\O$, for some $x\in\O$. We show that, surprisingly, for the best constants in the above formulas, there is no dependence on $K$, this means that the bounds which are in force in the Euclidean case, hold true for every choice of the nonlinear operator $\Delta_K$.

In the following section, we illustrate more precisely all the results that we prove throughout this paper; here we limit ourselves to stress that, besides their own interest, the proofs of such results may provide a more geometrical insight for the special case $K=B$ as well, and may help to shed light on what are the most relevant assumptions that we need to impose on a non linear operator, in order to expect those kind of estimates.

\section{Main results}

In Section \ref{torsione}, we extend to the anisotropic case inequalities \eqref{p-s} and \eqref{makeq}. More precisely,  in Theorem \ref{upt}, we show that, for any convex domain $\O\subset\R^N$, and any 2-homogeneous $C_+^2$-regular\footnote{See Section \ref{prel} for the definition.} function, $H_K$, the following {\it a priori} bounds on the $K$-torsional rigidity holds true

\be\label{torsion1}
\frac{1}{N(N+2)}(R_\O^K)^2\le\frac{T^K(\O)}{|\O|}\le \frac{1}{3}(R_\O^K)^2.
\ee

As for the corresponding linear case, equality can be achieved in the first inequality of \eqref{torsion1} when $\O$ coincides with $K$ up to translations and dilations, while (asymptotic) equality in the second inequality can be obtained by considering a sequence of rectangles with sides $a\ll b$, see Proposition \ref{rect} (notice that $1/3$ is the best constant in every dimension).

As far as we know, in the case $K=B$, even if the proof is significantly easier, there is not a specific reference for estimates of the form \eqref{torsion1} in general dimension available in the literature.

The proofs of both Theorems \ref{lowtors} and \ref{upt} are based on the choice of a suitable {\it one dimensional test function}. Since we defined the torsional as a supremum, it is easy to get an estimate from below using the variational formulation, while to get a bound from above it is important to use the PDE interpretation explained in \eqref{torsionpde}.

In Section \ref{eigenvalue}, we extend the result by Hersch and Protter by proving the formula 
\be\label{eigen}
\frac{\pi^2}{4}(R_\O^K)^{-2}\le \lambda_1^K(\O)\le \lambda_1(B) (R_\O^K)^{-2}.
\ee

As in the linear case the first inequality holds true as an equality if we consider a strip (or a sequence of ``thin'' rectangles, if we do not allow unbounded domains). The case of equality in the second inequality occurs when we choose $\O=x+rK$, for some $x\in\R^N$ and $r>0$; as in the linear case, the latter estimate follows by virtue of the monotonicity and the scaling laws. We prove in Proposition \ref{lambdaball} that $\lambda_1^K(K)=\lambda_1(B)$.

In Section \ref{reg}, we remove any regularity assumption on the convex body $K$. This result easily follows by approximating $K$ with smooth convex bodies $K_n$ and then by passing to the limit the corresponding inequalities for the sequence $K_n$.

We conclude this section by remarking that, in a recent paper \cite{dpigav} authors give a sharp estimate for the anisotropic principal frequency and torsional rigidity\footnote{They consider a slightly more general class of operators since they allow, in our notations, functions $H_K$ which are homogeneous of degree $p$.} in terms of the anisotropic perimeter and Lebesgue measure. We point out that thin rectangles are limit sets for the anisotropic torsional rigidity as well; but curiously, while such a sequence maximizes the ratio between torsional rigidity and volume among all the sets of given anisotropic inradius, it minimizes the same ratio among all sets of given anisotropic perimeter.

\section{Preliminaries}\label{prel}

In this section, we recall some basic notions concerning convex bodies and anisotropic differential operators.

Associated with a convex body $K\subset\R^N$, there is the {\it support function}, $h_K: \R^N\to\R$ defined as
$$h_K(x)=\max\big\{x\cdot y\ :\ y\in K\big\}.$$
The support function is a convex and positive 1-homogeneous function, when $K$ is symmetric with respect to the origin, $h_K$ is actually a norm, whose unit ball, $K^*=\{x\ :\ h_K(x)\le1\}$, is called {\it polar body} of $K$.

We denote by $d_K$ the anisotropic distance induced by $K$. In particular, given a set $A$ and a point $x$ we denote
\be\label{edist}
\dist_K(x,A)=\inf\big\{h_{K^*}(a-x)\ :\ a\in A\big\}.
\ee
It is useful to define the {\it anisotropic inradius} in terms of such a distance as
$$R_\O^K=\sup\big\{\dist_K(x,\partial\O)\ :\ x\in\O\big\}.$$

We now recall some duality relations between $K$ and $K^*$ that we use in the following sections and we refer to \cite[Chapter 1]{Scn} for a more comprehensive account on this subject.

$K^{**}=K$, namely $^*$ is an involution in the set of convex bodies; this means that in all the following relations the role played by $K$ and $K^*$ can be interchanged. In particular $h_{K^*}$ is a norm as well, $K=\big\{x\ :\ h_{K^*}(x)\le1\big\}$ is the corresponding unit ball and $(\R^N,h_{K^*})$, as a Banach space, is the dual space of, $(\R^N, h_K)$.

$H_K^*=h_{K^*}^2/2$ is the {\it Legendre-Fenchel transformation} of $H_K=h_K^2/2$; this entails that, when both the functions above are differentiable, the gradient mappings $DH_K$ and $DH_{K}^*$ are one the inverse of the other, namely
\be\label{invgrad}
DH_K(DH_K^*(x))=x\qquad\hbox{for every }x\in\R^N.
\ee

A complete description of the differential theory for convex bodies and support functions can be found in \cite{Scn}, here we limit ourselves to recall that the obstruction to the regularity of $h_K$ is the possible presence of ``flat'' parts in the boundary of $K$; in particular, if $K$ is strictly convex, then $K^*$ (and thence $h_K$ ) is differentiable and 
\be\label{gradsupp}
Dh_K(x)=y,
\ee
where $y$ is the only point in $\partial K$ such that the outer normal unit to the boundary of $K$ at $y$ is $x/|x|$, in other words, $Dh_K$ is the $0$-homogeneous extension of the inverse function of the {\it Gauss map}. Conversely, if $K$ has differentiable boundary, then $h_{K^*}$ is differentiable and $K^*$ is strictly convex. Moreover, for $x\in\partial K$
\be\label{gradnorm}
Dh_{K^*}(x)/|Dh_{K^*}(x)|=\nu_K(x),
\ee
where $\nu_K(x)$ is the outer unit normal to $\partial K$ at $x$.

A straightforward consequence of \eqref{gradsupp} is the following important formula:
\be\label{hdh}
h_{K^*}(Dh_K(x))=1\qquad\hbox{for every }x\ne0.
\ee

A similar criterion holds true for higher order differentiability of $H_K$: indeed, when $K$ is $C_+^2$-regular, {\it i.e.} the boundary of $K$ can be written as the graph of a twice differentiable function with positive Hessian, then both $H_K$ and $H_K^*$ are $C^2$-regular, and their Hessian matrices are positive. By virtue of the considerations above the differential operator $\Delta_K$, acting on $C^2$ functions as
$$\Delta_Ku(x)=\dive(DH_K(\nabla u(x))),$$
is uniformly elliptic, and classical existence and regularity results for the solutions of problems \eqref{lambdapde} and \eqref{torsionpde} can be applied.

\section{$K$-torsional rigidity}\label{torsione}

Throughout this section we assume that $K$ is a $C_+^2$-regular symmetric convex body, so that its support function, $h_K$, is a smooth norm on $\R^N$. We are going to show that balls (with respect to the metric induced by $h_K$) are extremal bodies for \eqref{torsion1}; in the following lemma we explicitly compute the $K$-torsional rigidity for those sets. As we shall see, this quantity does not depend on the choice of the norm.

\begin{lemma}\label{torsball}
Let $T^K$ be as in \eqref{torsion}, then
$$\frac{T^K(rK)}{|rK|}=\frac{r^2}{N(N+2)}\;.$$
\end{lemma}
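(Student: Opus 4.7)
The plan is to exhibit the torsion function for $rK$ explicitly and then integrate it. Since $H_K$ is $C^2_+$ and $rK$ is smooth, the variational problem \eqref{torsion} has a unique (up to sign) positive maximizer $u$ which, after a rescaling, solves \eqref{torsionpde}, and then $T^K(rK)=\int_{rK}u\,dx$. So it is enough to find $u$ explicitly.

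My candidate is
\[
u(x)=\frac{r^{2}-h_{K^{*}}(x)^{2}}{2N}=\frac{r^{2}}{2N}-\frac{H_{K^{*}}(x)}{N}.
\]
The boundary condition is immediate: on $\partial(rK)=\{h_{K^{*}}=r\}$ we have $u=0$. For the equation, I would compute
\[
\nabla u(x)=-\frac{1}{N}\,DH_{K^{*}}(x).
\]
Since $K$ is symmetric, $H_K$ is even and hence $DH_K$ is odd; combined with the $1$-homogeneity of $DH_K$ this gives $DH_K(tz)=t\,DH_K(z)$ for every $t\in\R$. Applying this with $t=-1/N$ and using the inversion formula \eqref{invgrad} yields
\[
DH_K(\nabla u(x))=-\frac{1}{N}DH_K\bigl(DH_{K^{*}}(x)\bigr)=-\frac{x}{N}.
\]
Taking the divergence gives $\Delta_K u=-1$, so $u$ is indeed the torsion function of $rK$ and $T^K(rK)=\int_{rK}u\,dx$.

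It remains to compute this integral. After the change of variables $x=ry$ (which sends $rK$ onto $K$ and satisfies $h_{K^{*}}(x)=r\,h_{K^{*}}(y)$), we get
\[
T^K(rK)=\frac{r^{N+2}}{2N}\int_{K}\bigl(1-h_{K^{*}}(y)^{2}\bigr)dy.
\]
To evaluate the right-hand side I would use the layer-cake identity based on the fact that $\{y\in K:h_{K^{*}}(y)\le t\}=tK$ has measure $t^{N}|K|$ for $t\in[0,1]$; differentiating gives the ``radial'' density $Nt^{N-1}|K|\,dt$, and therefore
\[
\int_{K}\bigl(1-h_{K^{*}}(y)^{2}\bigr)dy=N|K|\int_{0}^{1}(1-t^{2})t^{N-1}dt=\frac{2|K|}{N+2}.
\]
Combining these and dividing by $|rK|=r^{N}|K|$ produces the claimed identity $T^K(rK)/|rK|=r^{2}/(N(N+2))$.

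The only non-routine point is the verification that $\Delta_K u=-1$, which relies crucially on the duality $DH_K\circ DH_{K^{*}}=\mathrm{id}$ from \eqref{invgrad} and on extending $1$-homogeneity to negative scalars via the symmetry of $K$. Everything else is a direct computation, and in particular the final value $1/(N(N+2))$ is independent of $K$ precisely because the anisotropic ``radial'' integration on $K$ reduces, after the change of variables $x=ry$, to a one-dimensional integral that remembers only the dimension $N$.
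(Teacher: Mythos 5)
Your proof is correct. You pick the same torsion function $u=(r^2-h_{K^*}^2)/(2N)$ as the paper and verify $\Delta_K u=-1$ by combining the inversion identity \eqref{invgrad} with the $1$-homogeneity and oddness of $DH_K$; in fact you spell out this homogeneity step more carefully than the paper, which passes from $\Delta_K H_K^*=N$ to $\Delta_K u=-1$ without comment even though $\Delta_K$ is nonlinear. Where you genuinely diverge is in evaluating $\int_{rK} u\,dx$: the paper applies the co-area formula, rescales to $\partial K$, and then computes the surface integral $\int_{\partial K}|Dh_{K^*}|^{-1}\,d\HH^{N-1}=N|K|$ using \eqref{gradnorm}, \eqref{hdh} and the divergence theorem. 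You instead rescale to $K$ and use the pushforward of Lebesgue measure under $h_{K^*}$ (a layer-cake argument from $|\{h_{K^*}\le t\}|=t^N|K|$), reducing everything to the elementary one-dimensional integral $\int_0^1(1-t^2)t^{N-1}\,dt$. Your route avoids the boundary integral and the identities \eqref{gradnorm}--\eqref{hdh} entirely, making the computation cleaner and also insensitive to the regularity of $\partial K$; the paper's co-area version has the virtue of introducing manipulations (in particular the treatment of $|Dh_{K^*}|$) that are reused almost verbatim in the proof of Theorem \ref{lowtors}, so it serves a pedagogical purpose there. Either way the final constant is manifestly independent of $K$, for exactly the reason you identify.
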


\begin{proof}
We start by constructing the solution of the problem \eqref{torsionpde}. From \eqref{invgrad} we deduce
$$\Delta_K H_K^*(x)=\dive\left[DH_K(DH_K^*(x))\right]=\dive x=N.$$
Then the function $u=(r^2-h^2_{K^*})/2N$ is a solution of
$$-\Delta_K u=1\mbox{ in }rK,\qquad u=0\mbox{ on }\partial(rK).$$
We have
\[\begin{split}
\int_{rK}h^2_{K^*}\,dx&=\int_0^r\int_{\{h_{K^*}=t\}}\frac{h^2_{K^*}}{|Dh_{K^*}|}d\HH^{N-1}\,dt\\
&=\int_0^r t^2\int_{\partial(tK)}\frac{1}{|Dh_{K^*}|}\,d\HH^{N-1}\,dt\\
&=\int_0^r t^{N+1}\int_{\partial K}\frac{1}{|Dh_{K^*}|}\,d\HH^{N-1}\,dt=\frac{r^{N+2}}{N+2}\int_{\partial K}\frac{1}{|Dh_{K^*}|}\,d\HH^{N-1}\;,
\end{split}\]
where we used the co-area and the change of variable formulas. Let us evaluate the last integral. Thanks to \eqref{gradnorm} and \eqref{hdh}, and since the support function is 1-homogeneous, we get
\[\begin{split}
\int_{\partial K}\frac{1}{|Dh_{K^*}|}\,d\HH^{N-1}&=\int_{\partial K}h_k\Big(\frac{Dh_{K^*}}{|Dh_{K^*}|}\Big)\,d\HH^{N-1}\\
&=\int_{\partial K}h_K(\nu_K(x))\,d\HH^{N-1}\\
&=\int_{\partial K}x\cdot \nu_K(x)\,d\HH^{N-1}\\
&=\int_K\dive x\,dx=N|K|.
\end{split}\]
Therefore we obtain
$$T^K(rK)=\int_{rK}\frac{r^2-h_{K^*}}{2N}\,dx=\frac{1}{2N}\Big(r^2|rK|-\frac{r^{N+2}}{N+2}N|K|\Big)=\frac{r^2|rK|}{N(N+2)}$$
as required.
\end{proof}

In the following proposition we prove a lower bound for the $K$-torsional rigidity for strictly convex domains. In such a case, the function
\be\label{ubar}
\overline u(x)=\frac{1-\big(h_{\O^*}(x)\big)^2}{2N}
\ee
used in the above lemma to compute the $K$-torsional rigidity of balls is no longer the solution of the anisotropic torsion problem, but it still vanishes on the boundary of the domain, and it can be used as a test function. Notice that, since $T^K$ maximizes the quotient
$$\left(\int_\O u\,dx\right)^2\left(\int_\O h^2_K(\nabla u)\,dx\right)^{-1}\;,$$
then every test function provides a lower bound for the torsion. Our choice of the test function $\overline u$ is motivated by the fact that it provides the optimal lower bound, as stated in Theorem \ref{lowtors} below.

\begin{teo}\label{lowtors}
Let $\O\subset \R^N$ be a convex bounded domain, then
\be\label{torslow}
\frac{T^K(\O)}{|\O|}\ge \frac{1}{N(N+2)}(R_\O^K)^2.
\ee
Moreover \eqref{torslow} is sharp, and equality occurs only if $\O=x+rK$, for some $x\in\R^N$ and $r\ge 0$.
\end{teo}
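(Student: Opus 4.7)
The plan is to exploit the variational formula \eqref{torsion} by plugging in the test function $\overline u(x)=(1-h_{\O^*}^2(x))/(2N)$ suggested just above the statement. First I would translate $\O$ so that the maximal inscribed anisotropic ball is centred at the origin; this normalisation gives $RK\subseteq\O$ (with $R:=R_\O^K$) and $0\in\mathrm{int}(\O)$, so that $h_{\O^*}$ is the Minkowski functional of $\O$ and $\{h_{\O^*}\le 1\}=\O$. Consequently $\overline u$ is Lipschitz on $\overline\O$, vanishes on $\partial\O$, and is admissible in \eqref{torsion}.

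Next I would compute both integrals appearing in the variational quotient. Using the identity $\{h_{\O^*}\le t\}=t\O$ for $t\in[0,1]$, a layer-cake argument gives
$$\int_\O h_{\O^*}^2\,dx = N|\O|\int_0^1 t^{N+1}\,dt = \frac{N|\O|}{N+2},$$
so $\int_\O \overline u\,dx = |\O|/(N(N+2))$. For the denominator, the chain rule yields $\nabla\overline u=-(h_{\O^*}/N)\,Dh_{\O^*}$ a.e., and since $h_K$ is a norm,
$$h_K(\nabla \overline u)=\frac{h_{\O^*}(x)}{N}\,h_K(Dh_{\O^*}(x))\quad\text{a.e.}$$
The key estimate is $h_K(Dh_{\O^*})\le 1/R$ a.e.\ in $\O$: indeed $RK\subseteq\O$ is equivalent to $Rh_K\le h_\O$ pointwise, while \eqref{hdh} applied with $K$ replaced by $\O^*$ yields $h_\O(Dh_{\O^*})=1$ a.e. Combining with the previous computation,
$$\int_\O h_K^2(\nabla \overline u)\,dx \le \frac{1}{N^2R^2}\int_\O h_{\O^*}^2\,dx = \frac{|\O|}{N(N+2)R^2}.$$

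Inserting these estimates into \eqref{torsion} gives $T^K(\O)\ge R^2|\O|/(N(N+2))$, which is \eqref{torslow}. For the equality case I would trace the inequalities back: equality forces $h_K(Dh_{\O^*}(x))=1/R$ for a.e.\ $x\in\O$, equivalently (by $0$-homogeneity of $Dh_{\O^*}$) for a.e.\ $\sigma\in S^{N-1}$. Since $Dh_{\O^*}|_{S^{N-1}}$ is essentially the inverse Gauss map of $\O^*$ and hence covers $\partial\O^*$ up to an $\HH^{N-1}$-null set, continuity of $h_K$ promotes this to $h_K\equiv 1/R$ on $\partial\O^*$, i.e.\ $\partial\O^*\subseteq \partial(R^{-1}K^*)$. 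A ray-wise comparison from the origin then forces $\O^*=R^{-1}K^*$, so that $\O=RK$ up to the initial translation.

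The step I expect to be most delicate is the equality analysis: one must upgrade the almost-everywhere identity in $\O$ to a pointwise identity on all of $\partial\O^*$, which uses the surjectivity (up to null sets) of the gradient map of a $1$-homogeneous convex function and continuity of $h_K$. Everything else is a direct consequence of the symmetric computation already performed in Lemma \ref{torsball} and the duality identities recalled in Section \ref{prel}.
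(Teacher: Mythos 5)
Your derivation of the inequality is correct and actually tightens the paper's argument: you use the same test function $\overline u$, but by computing $\int_\O h_{\O^*}^2$ via the layer-cake formula and bounding $h_K(\nabla\overline u)$ pointwise by $h_K(Dh_{\O^*})\le 1/R$ (from $Rh_K\le h_\O$ together with $h_\O(Dh_{\O^*})\equiv 1$), you avoid the coarea computations and, more importantly, you do not need $\O$ to be strictly convex or smooth -- Rademacher's theorem is all that is required -- so the paper's approximation step to remove strict convexity becomes superfluous. This is a genuinely more robust route to the same bound.

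The equality analysis, however, contains a real gap. You assert that ``$Dh_{\O^*}|_{S^{N-1}}$ is essentially the inverse Gauss map of $\O^*$ and hence covers $\partial\O^*$ up to an $\HH^{N-1}$-null set.'' This is false for general convex bodies: the image of $\sigma\mapsto Dh_{\O^*}(\sigma)$ (over the full-measure set of differentiability points) is the set of \emph{exposed} points of $\O^*$ that are exposed in a unique direction, and if $\O^*$ is a polytope this set is finite -- far from covering $\partial\O^*$ up to a null set. Consequently the promotion to $h_K\equiv 1/R$ on all of $\partial\O^*$ does not follow. The paper circumvents this precisely by working with the other member of the dual pair: by \eqref{gradnorm} one has $Dh_{\O^*}(\sigma)\parallel\nu_\O(x_\sigma)$ where $x_\sigma$ is the boundary point of $\O$ in direction $\sigma$, so the equality condition $h_K(Dh_{\O^*}(\sigma))=1/R$ is equivalent (after dividing by $h_\O(Dh_{\O^*})=1$) to $h_K(\nu_\O(x))/h_\O(\nu_\O(x))=1/R$ for the outer normals $\nu_\O(x)$ of $\O$. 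Since the Gauss map $\nu_\O:\partial\O\to S^{N-1}$ of a convex body is \emph{surjective} (every direction is the normal to some supporting hyperplane), one obtains $h_\O\equiv R\,h_K$ on all of $S^{N-1}$, hence $\O=RK$. To make this step rigorous you need $\nu_\O$ single-valued and continuous, i.e.\ $\partial\O$ of class $C^1$; one way to justify this is to observe that equality in \eqref{torslow} also forces $\overline u$ to be the torsion function, whose interior regularity yields the smoothness of $h_{\O^*}$ and hence of $\partial\O$. In short: phrase the rigidity via the Gauss map of $\O$ (surjective), not via the gradient/support map of $\O^*$ (whose image can be tiny).
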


\begin{proof}
We first prove inequality \eqref{torslow} under the assumption that $\O$ is strictly convex.
Up to a translation of the domain, we can always assume that $R_\O^K K\subseteq\O$, so that $h_\O\ge R_\O^K h_K$. Let $\overline{u}$ be the function in \eqref{ubar}; $\overline u$ is an admissible function for the problem \eqref{torsion}, hence
$$T^K(\O)=\max_{u\in W_0^{1,2}(\O)\setminus\{0\}}\frac{\Big(\int_\O u\,dx\Big)^2}{\int_\O h^2_K(\nabla u)\,dx}\ge\frac{\Big(\int_\O\overline{u}\,dx\Big)^2}{\int_\O h^2_K(\nabla\overline{u})\,dx}\;.$$
Since $\O$ is a strictly convex domain, the function $h_{\O^*}$ is differentiable and, by repeating similar computations as those in Lemma \ref{torsball}, we have
\[\begin{split}
\int_\O h^2_{\O^*}\,dx
&=\int_0^1\int_{\{h^2_{\O^*}=t\}}\frac{t}{|Dh^2_{\O^*}|}\,d\HH^{N-1}\,dt\\
&=\int_0^1\frac t2\int_{\{h^2_{\O^*}=t\}}\frac{h_\O(Dh_{\O^*})}{h_{\O^*}|Dh_{\O^*}|}\,d\HH^{N-1}\,dt\\
&=\int_0^1\frac{t^{N/2}}{2}\int_{\partial\O}h_\O(\nu_\O(x))\,d\HH^{N-1}\,dt\\
&=\frac{1}{N+2}\int_{\partial\O}x\cdot\nu_\O(x)\,d\HH^{N-1}\\
&=\frac{1}{N+2}\int_\O\dive x\,dx=\frac{N|\O|}{N+2}\;.
\end{split}\]
Hence 
\be\label{num}
\left(\int_\O\overline{u}\,dx\right)^2=\frac{|\O|^2}{N^2(N+2)^2}\;.
\ee

We now compute the denominator. Again, the co-area and the change of variable formulas give
\be\label{den1}
\begin{split}
\int_\O h^2_K(\nabla\overline{u})\,dx
&=\frac{1}{4N^2}\int_\O h^2_K(Dh^2_{\O^*})\,dx\\
&=\frac{1}{4N^2}\int_0^1\int_{\{h^2_{\O^*}=t\}}\frac{h^2_K(Dh^2_{\O^*})}{|Dh^2_{\O^*}|}\,d\HH^{N-1}\,dt\\
&=\frac{1}{4N^2}\int_0^1 t^{(N-1)/2}\int_{\partial\O}\frac{h^2_K(Dh^2_{\O^*})}{|Dh^2_{\O^*}|}\,d\HH^{N-1}\,dt.
\end{split}
\ee
By using the homogeneity of the support functions and by recalling equations \eqref{hdh} and \eqref{gradnorm}, we have
\be\label{den2}
\begin{split}
\frac{h^2_K(Dh^2_{\O^*}(x))}{|Dh^2_{\O^*}(x)|}
&=\frac{|Dh^2_{\O^*}(x)|}{h_\O(Dh_{\O^*}(x))}\cdot\frac{h^2_K(Dh^2_{\O^*}(x))}{|Dh^2_{\O^*}(x)|^2}\\
&=\frac{2h_{\O^*}(x)}{h_\O(\nu_\O(x))}\cdot h_K^2(\nu_\O(x))=\frac{2t^{1/2}h_K^2(\nu_\O(x))}{h_\O(\nu_\O(x))}\;.
\end{split}
\ee
Plugging \eqref{den2} into \eqref{den1} we get
\be\label{den3}
\begin{split}
\int_\O h^2_K(\nabla\overline{u})\,dx
&=\frac{1}{4N^2}\int_0^1 t^{N/2}\int_{\partial\O}2h_\O(\nu_\O(x)) \frac{h^2_K(\nu_\O(x))}{h^2_\O(\nu_\O(x))}\,d\HH^{N-1}(x)\,dt\\
&\le\frac{1}{4N^2}\int_0^1 t^{N/2}\int_{\partial \O}2\frac{h_\O(\nu_\O(x))}{(R_\O^K)^2}\,d\HH^{N-1}(x)\,dt=\frac{1}{N(N+2)}\frac{|\O|}{(R_\O^K)^2}\;,
\end{split}
\ee
where we used the fact that $h_\O\ge R_\O^K h_K$. Combining \eqref{num} and \eqref{den3} we find:
\[
\frac{T^K(\O)}{|\O|}\ge \frac{1}{N(N+2)}(R_\O^K)^2, 
\]
as required.\\

We are now left to show the validity of \eqref{torslow} without the assumption on the strict convexity of the domain $\O$.

We recall that strictly convex bodies are a dense subset of the set of convex bodies with respect to the topology induced by the Hausdorff distance. In particular (see for instance \cite{Sc}) there exists a sequence of convex bodies $\O_n\subset\O$ such that $h_{\O_n}\llcorner\mathbb S^{N-1}$ converges to $h_\O\llcorner\mathbb S^{N-1}$ in the $C^0$-norm. Such a convergence ensures that as $n\to\infty$
\be\label{converg}
|\O_n|\to|\O|\qquad\hbox{and}\qquad R_{\O_n}^K\to R_\O^K.
\ee
From \eqref{monotonicity}, it follows that
$$T(\O)\ge T(\O_n),$$
and by applying \eqref{torslow} to each $\O_n$ equation \eqref{torslow}, we find
$$T^K(\O)\ge \frac{{|\O_n|}}{N(N+2)}(R_{\O_n}^K)^2,$$
which, combined with \eqref{converg}, gives the desired result.\\
Notice that, by virtue of Lemma \ref{torsball}, inequality \eqref{torslow} is sharp; moreover, if it holds true as an identity, then \eqref{den3} as well must be an equality, in particular
\[
\int_{\partial\O}h_\O(\nu_\O(x)) \frac{h^2_K(\nu_\O(x))}{h^2_\O(\nu_\O(x))}\,d\HH^{N-1}(x)=\int_{\partial \O}\frac{h_\O(\nu_\O(x))}{(R_\O^K)^2}\,d\HH^{N-1}(x).
\]
Since the integrand function in the left-hand side is pointwise lower than the one in the right-hand side, and since they are continuous functions, they must coincide everywhere. Namely $h_\O=R_\O^K h_K$, that is $\O=R_\O^K K$.

\end{proof}

In the following theorem we extend Makai's result \eqref{makeq} to a general dimension and to the anisotropic case. The proof given in \cite{mak} is mainly based on a clever use of the Schwarz inequality, on an inequality involving real functions and their derivatives, and on the inequality
\be\label{trivial}
|v|^2\ge v_i^2.
\ee

The second ingredient is applied to a function which is a parametrization of the boundary of the domain (we recall that in Makai's setting the boundary is a set of ``dimension one''), while \eqref{trivial} is a trivial inequality for the Euclidean norm, but is false for a generic norm. Notice that the geometric feature of the sphere that ensures the validity of \eqref{trivial} is the fact that each radius connecting the center to a boundary point is perpendicular to the tangent space at that point.

In our proof we have to change strategy: as in Makais's proof we divide $\O$ into suitable subdomains; then we construct, for each subdomain, a family of one dimensional obstacle functions, and finally we use some linear transformations to get an analogue of \eqref{trivial}.

\begin{teo}\label{upt}
Let $\O\subset\R^N$ be a bounded convex body, then
\be\label{upboundtors}
\frac{T^K(\O)}{|\O|}\le\frac13\left(R_\O^K\right)^2.
\ee
\end{teo}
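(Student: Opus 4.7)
The proof passes through the PDE formulation: after rescaling, the maximizer of \eqref{torsion} is the unique weak solution $u$ of
\[
-\Delta_K u=1\text{ in }\O,\qquad u=0\text{ on }\partial\O,
\]
and $T^K(\O)=\int_\O u\,dx$. A Hausdorff-approximation argument, analogous to the one at the end of the proof of Theorem~\ref{lowtors}, reduces the claim to the case in which $\O$ is a convex polytope $P$.

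The core ingredient is a one-dimensional, slab-like obstacle function: given any direction $\omega$ with $h_K(\omega)>0$ and any numbers $a<b$, a direct computation based on the 1-homogeneity of $DH_K$ and Euler's identity $DH_K(\omega)\cdot\omega=h_K(\omega)^2$, entirely parallel to that in Lemma~\ref{torsball}, shows that
\[
\psi_{\omega,a,b}(x)=\frac{(x\cdot\omega-a)(b-x\cdot\omega)}{2h_K(\omega)^2}
\]
is the torsion function of the slab $\{a\le y\cdot\omega\le b\}$, in particular satisfies $-\Delta_K\psi_{\omega,a,b}=1$ and is nonnegative on the slab. The normalisation by $h_K(\omega)^2$ is precisely the ``linear transformation'' that reduces the anisotropic one-dimensional Poisson equation to the Euclidean one, playing the role of the inequality $|v|^2\ge v_i^2$ mentioned before the statement.

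Denote by $\omega_i$ the outer unit normals to the facets $F_i$ of $P$, by $x_0$ an anisotropic incenter (so that $x_0+R_P^KK\subseteq P$), and by $d_i(x)$ the Euclidean distance from $x$ to the hyperplane containing $F_i$. For each facet to which $x_0+R_P^KK$ is tangent one has $d_i(x_0)=R_P^K h_K(\omega_i)$. The plan is to decompose $P$ into the anisotropic Voronoi cells associated with these \emph{touched} facets,
\[
P_i=\bigl\{x\in P:d_i(x)/h_K(\omega_i)\le d_j(x)/h_K(\omega_j)\text{ for every touched }j\bigr\},
\]
and to use on each $P_i$ the obstacle $\psi_i=\psi_{\omega_i,a_i,a_i+2R_P^K h_K(\omega_i)}$ with $a_i$ chosen so that $\psi_i$ vanishes on $F_i$. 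By construction $\psi_i\ge 0$ on $P_i$, $\psi_i=u=0$ on $F_i\cap\partial P$, and on the shared interior boundaries of adjacent cells the defining Voronoi inequalities force $\psi_i\ge\psi_j$; the comparison principle for $-\Delta_K$ then yields $u\le\psi_i$ on $P_i$.

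Integrating, summing, and slicing in each direction $\omega_i$ reduces the estimate to the elementary computation $\int_0^L s(L-s)\,ds=L^3/6$ with $L=2R_P^K h_K(\omega_i)$, which, combined with the Cauchy-type identity $\sum_i|F_i|\,d_i(x_0)=N|P|$ restricted to the touched facets (where $d_i(x_0)=R_P^K h_K(\omega_i)$), delivers
\[
\int_P u\,dx\le\sum_i\int_{P_i}\psi_i\,dx\le\frac{(R_P^K)^2}{3}|P|.
\]

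The main obstacle is twofold: one must verify that the touched-facet Voronoi cells cover $P$ while respecting the bound $d_i\le 2R_P^K h_K(\omega_i)$ on each $P_i$ (needed for the nonnegativity of $\psi_i$), and carry out the final slicing bookkeeping so that the summation produces the sharp constant $1/3$ with no extraneous factor depending on $N$ or $K$. Both rest on the characterisation of $R_P^K$ as the common value of $d_i(x_0)/h_K(\omega_i)$ across the touched facets.
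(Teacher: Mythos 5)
Your proposal reproduces the right skeleton of the paper's argument — slab-like barrier functions with $-\Delta_K\psi=1$, a Voronoi-type decomposition of the polytope, a comparison principle, and a fiber-wise computation giving the $1/3$ constant — and you even arrive at the same barrier: in terms of the anisotropic distance $t=d_K(\cdot,F_i)$ your $\psi_i$ equals $R^K_P t - t^2/2$, exactly the paper's $u_j^\ep$ with $\ep=0$. But several steps that you treat as routine are where all the work actually lies, and as written they have real gaps.

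First, the comparison step. You argue cell-by-cell: ``$u\le\psi_i$ on $P_i$ by the comparison principle.'' This needs $u\le\psi_i$ on $\partial P_i$, and on the interior Voronoi interfaces $P_i\cap P_j$ you only have $\psi_i=\psi_j$ there; concluding $u\le\psi_i$ on that interface is exactly the statement you are trying to prove for the cell $P_j$, so the argument is circular. The paper instead proves the global inequality $v\le u^\ep$ on $\O$ for the glued, piecewise-smooth barrier $u^\ep$, and the bulk of the proof is devoted to ruling out a local minimum of $u^\ep-v$ both at interior points of a cell and, via a delicate superdifferential argument, at the kinks $\O_i\cap\O_j$. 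The paper also introduces an $\ep$-perturbation $u_j^\ep=-\tfrac{d_K^2}{2}(1+\ep)+R^K_\O d_K(1+2\ep)$ precisely so that $\nabla u_j^\ep\neq0$ throughout $\O_j$, which is needed to invoke interior Schauder regularity for $v$ and compare Hessians; your unperturbed $\psi_i$ has a critical point at $d_K=R^K_P$ where that argument breaks.

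Second, the slicing. You propose to integrate $\psi_i$ over $P_i$ by slicing ``in the direction $\omega_i$,'' but the Voronoi cell $P_i$ is in general \emph{not} a cylinder/graph over $F_i$ in the Euclidean normal direction. The paper resolves this with a genuine affine change of variables $L$ (with $L\restriction\nu_j^\perp=\mathrm{Id}$, $L\,Dh_K(\nu_j)=h_K(\nu_j)\nu_j$, $\det L=1$) and then proves, using a normal-cone argument, that $L\O_j$ \emph{is} a graph over $LF_j$. Your remark that ``the normalisation by $h_K(\omega)^2$ is precisely the linear transformation'' conflates a scaling of the barrier with a geometric transformation of the domain; the latter is an independent and essential step, and it is absent from your plan.

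Third, the quantitative bound on each cell. You state that the covering must satisfy $d_i\le 2R^K_P\,h_K(\omega_i)$ on $P_i$, which suffices for $\psi_i\ge0$ but is too weak for the final inequality. The fiber-wise computation $\int_0^{\rho}(-\tfrac{r^2}{2h_K^2}+\tfrac{R^K r}{h_K})\,dr\le\tfrac{(R^K)^2}{3}\rho$ holds iff $\rho\le R^K_P h_K(\omega_i)$, i.e.\ $d_K(\cdot,F_i)\le R^K_P$ on the cell — half of what you require. This is automatic when the decomposition uses \emph{all} facets (then $d_K(x,F_i)=d_K(x,\partial P)\le R^K_P$ on $\O_i$), which is what the paper does. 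Restricting to ``touched'' facets is both unnecessary and problematic: it is not clear that the touched-facet Voronoi cell satisfies $d_K\le R^K_P$, nor how your proposed identity $\sum_i|F_i|\,d_i(x_0)=N|P|$ would substitute for the pointwise fiber estimate. I'd suggest dropping the touched-facet restriction entirely, adopting the global glued barrier with the $\ep$-perturbation, and adding the linear transformation and its graph property.
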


\begin{proof}
We prove the claim when $\O$ is a polytope, and then the validity of \eqref{upboundtors} follows by approximation. Let us denote by $F_1,\ldots,F_n$ the facets of $\O$, and divide $\O$ into subdomains $\O_j$ defined as
$$\O_j=\big\{x\in\O\ :\ d_K(x,F_j)\le d_K(x,F_l)\mbox{ for every }l\ne j\big\},$$
where $d_K(x,F_j)$ is the distance function defined in \eqref{edist}. For every $j=1,\ldots,n$ and $\ep>0$ we define
$$u_j^\ep(x)=-\frac{d_K(x,F_j)^2}{2}(1+\ep)+R_\O^Kd_K(x,F_j)(1+2\ep).$$
Notice that, if $x\in\O_i\cap\O_j$, then $u_i^\ep(x)=u_j^\ep(x)$.

We denote by $\nu_j$ the outer unit normal to $F_j$. Since $d_K(x,F_j)=\dist(x,F_j) h_K^{-1}(\nu_j)$, the functions $u_j^\ep$ are smooth in the interior of $\O_j$; moreover, since $\nabla\dist(x,F_j)=-\nu_j$, we have
$$\nabla u_j^\ep(x)=\left(\frac{\dist(x,F_j)}{h_K^2(\nu_j)}(1+\ep)-\frac{R_\O^K}{h_K(\nu_j)}(1+2\ep)\right)\nu_j.$$
Thus
\[\begin{split}
\Delta_K(u_j^\ep)&=\dive\left[DH_K(\nu_j) \left(\frac{\dist(x,F_j)}{h_K^2(\nu_j)}(1+\ep)-\frac{R_\O^K}{h_K(\nu_j)}(1+2\ep)\right)\right]\\
&=\frac{1+\ep}{h_K^2(\nu_j)}DH_K(\nu_j)\cdot(-\nu_j)=-1-\ep.
\end{split}\]
Let us consider the function $u^\ep:\O\to\R$ defined by $u^\ep\llcorner_{\O_j} = u_j^\ep\llcorner_{\O_j}$. We now prove that, if $v$ is the solution of the anisotropic torsion problem on $\O$, then $v(x)\le u^\ep(x)$ in $\O$. Suppose, by contradiction, that there exists a point $x\in\O$ such that $u^\ep(x)<v(x)$; since we know that $u^\ep=v$ on $\partial\O$, then the function $u^\ep-v$ has a local minimum, say $x_0$ inside $\O$.

We show that $x_0$ cannot be neither an interior point nor a boundary point of each $\O_j$. If $x_0\in\mathrm{int}\,\O_j$, then $\nabla u^\ep(x_0)=\nabla v(x_0)$, so 
\be\label{gradparal}
D^2H_K(\nabla u^\ep(x_0))=D^2H_K(\nabla v(x_0)).
\ee
Moreover, since $\nabla u^\ep(x_0)\neq 0$ in $\O_j$, then also $\nabla v(x_0)\neq 0$. Thus $v$ solves, in a neighborhood of $x_0$ an elliptic equation with coefficients in $C^{0,\alpha}$, and thence is $C^{2,\alpha}$-regular (see for instance \cite[Chapter 6]{GT}).
Then
\be\label{hesspos}
D^2(u^\ep-v)(x_0)\ge0.
\ee
In particular, using \eqref{gradparal} and \eqref{hesspos}, and since the trace of the product of two positive-definite matrices is positive, we have that
\[\begin{split}
-\ep &=\Delta_Ku^\ep(x_0)-\Delta_Kv(x_0)\\
&=\mathrm{tr}\left[D^2H_K(\nabla u^\ep(x_0))D^2u^\ep(x_0)\right]-\mathrm{tr}\left[D^2H_K(\nabla v(x_0))D^2v(x_0)\right]\\
&=\mathrm{tr}\left[D^2H_K(\nabla u^\ep(x_0))\left(D^2u^\ep(x_0)-D^2v(x_0)\right)\right]\\
&=\mathrm{tr}\left[D^2H_K(\nabla u^\ep(x_0))D^2(u^\ep-v)(x_0)\right]\ge0,
\end{split}\]
which is a contradiction.

We are left to show the contradiction in the case when $x_0\in\O_i\cap\O_j$, $i\ne j$. Let $w(x)=v(x)-v(x_0)+u^\ep(x_0)$; then $w(x_0)=u^\ep(x_0)$ and, thanks to our assumption on $x_0$, there exists a positive number $r$, such that $u^\ep(x)\ge w(x)$, for every $x\in(x_0+rK)\subset\O$.

Since, $u_j^\ep$ is a concave function of the distance, we have that
\be\label{superdiff}
u_j^\ep(x)\le u_j^\ep(x_0)+\nu_j\cdot(x-x_0)\left(\frac{\dist(x,F_j)}{h_K^2(\nu_j)}(1+\ep)-\frac{R_\O^K}{h_K(\nu_j)}(1+2\ep)\right)
\ee
for every $x\in(x_0+rK)$. Without loss of generality we can choose $r<R_\O^K\ep/(1+\ep)$; this choice of $r$ allows us to conclude that
$$d_K(x,F_j)\le r+d_K(x_0,F_j),$$
and thus the factor
$$\frac{\dist(x,F_j)}{h_K^2(\nu_j)}(1+\ep)-\frac{R_\O^K}{h_K(\nu_j)}(1+2\ep),$$
appearing in the right-hand side of \eqref{superdiff}, never vanishes, for every $x\in(x_0+rK)$. Indeed
\[\begin{split}
\frac{\dist(x,F_j)}{h_K^2(\nu_j)}(1+\ep)-\frac{R_\O^K}{h_K(\nu_j)}(1+2\ep)
&\le\frac{r(1+\ep)}{h_K(\nu_j)}+\frac{d_K(x_0,F_j)(1+\ep)}{h_K(\nu_j)}-\frac{R_\O^K}{h_K(\nu_j)}(1+2\ep)\\
&<\frac{d_K(x_0,F_j)(1+\ep)}{h_K(\nu_j)}-\frac{R_\O^K}{h_K(\nu_j)}(1+\ep)\le 0.
\end{split}\]
Moreover, since $r<R_\O^K\ep/(1+\ep)$, for any other $l$ such that $x_0\in\O_l$, inside $x_0+rK$, $u_l^\ep$ is an increasing function of the anisotropic distance $d_K$, then it follows from the definition of the sets $\O_l$ that
$$u_l^\ep(x)\le u_j^\ep(x)\qquad\hbox{for every }x\in (x_0+rK)\cap\O_l,$$
that entails that
\be\label{u-est}
u^\ep(x)\le u_j^\ep(x)\qquad\hbox{for every }x\in(x_0+rK).
\ee

Plugging \eqref{u-est} into \eqref{superdiff} and recalling that $u^\ep(x_0)=u_j^\ep(x_0)$, we obtain
\be\label{supdiffj}
u^\ep(x)\le u^\ep(x_0)+\nu_j\cdot(x-x_0)\left(\frac{\dist(x,F_j)}{h_K^2(\nu_j)}(1+\ep)-\frac{R_\O^K}{h_K(\nu_j)}(1+2\ep)\right)
\ee
for every $x\in(x_0+rK)$.

Arguing analogously for the function $u_i^\ep$ we find
\be\label{supdiffi}
u^\ep(x)\le u^\ep(x_0)+\nu_i\cdot(x-x_0)\left(\frac{\dist(x,F_i)}{h_K^2(\nu_i)}(1+\ep)-\frac{R_\O^K}{h_K(\nu_i)}(1+2\ep)\right)
\ee
for every $x\in(x_0+rK)$. By imposing in \eqref{supdiffj} and \eqref{supdiffi} the conditions $v\le u^\ep$ and $v=u^\ep$ at $x_0$, we find
$$v(x)- v(x_0)\le\nu_j\cdot(x-x_0)\left(\frac{\dist(x,F_j)}{h_K^2(\nu_j)}(1+\ep)-\frac{R_\O^K}{h_K(\nu_j)}(1+2\ep)\right)$$
and
$$v(x)- v(x_0)\le \nu_i\cdot(x-x_0)\left(\frac{\dist(x,F_i)}{h_K^2(\nu_i)}(1+\ep)-\frac{R_\O^K}{h_K(\nu_i)}(1+2\ep)\right).$$
Since $v$ is differentiable at $x_0$, by multiplying both sides by the factor $|x-x_0|^{-1}$ and taking the limit as $x\to x_0$ we obtain that $\nabla u(x_0)$ must be proportional both to $\nu_i$ and $\nu_j$, that is a contradiction.

We can now use the function $u_j$ as a barrier function to get an estimate of the torsion rigidity of $\O$. In order to compute $\int_{\O_j}u_j^\ep(x)\,dx,$ it is important to make sure that each subdomain $\O_j$ can be written as a graph over the facet $F_j$. This is always the case when $K$ is the Euclidean ball, since its radius is orthogonal to the tangent space, while this is not true for a general convex body $K$.

To get the same condition, we consider a linear transformation $L$ (see Figure \ref{fig2}), such that $L\llcorner\nu_j^\perp=\mathrm{Id}$ and $L Dh_K(\nu_j)=h_K(\nu_j)\nu_j$. Notice that, since $Dh_K(\nu_j)\cdot\nu_j=h_K(\nu_j)>0$, $L$ is well defined and
\be\label{determinante}
\mathrm{det}\,L=1.
\ee
Moreover $h_K(\nu_j)=h_{LK}(\nu_j)$, indeed
\be\label{supplk}
h_K(\nu_j)=\sup_{x\in K}x\cdot\nu_j=\sup_{y\in LK}L^{-1}y\cdot\nu_j=\sup_{y\in LK}\left(L^{-1}y'\cdot\nu_j+(y\cdot\nu_j)L^{-1}\nu_j\cdot\nu_j\right),
\ee
where $y=y'+(y\cdot\nu_j)\nu_j$, and $y'\cdot \nu_j=0$. Since $L\llcorner\nu_j^\perp=\mathrm{Id}$, then $L^{-1}y'\cdot\nu_j$, and since $Dh_K(\nu_j)=h_K(\nu_j)L^{-1}\nu_j$, then $L^{-1}\nu_j\cdot\nu_j=1$ so that, by recalling \eqref{supplk}
\[
h_K(\nu_j)=\sup_{y\in LK}y\cdot\nu_j=h_{LK}(\nu_j).
\]
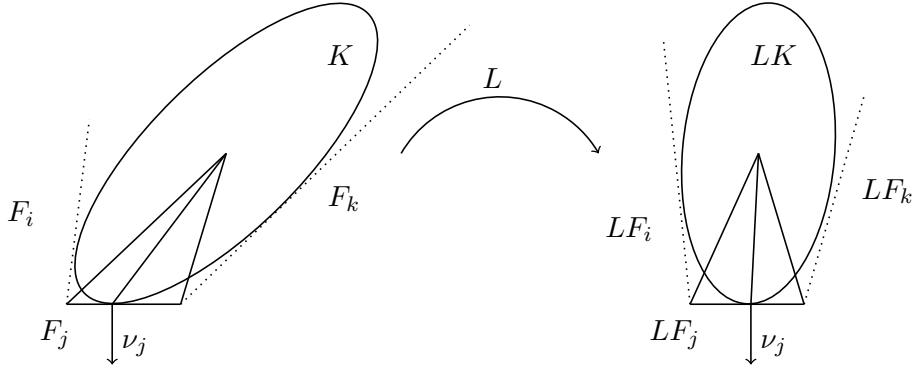
\begin{figure}[h]
\begin{tikzpicture} [semithick]

\draw [rotate around={-45:(0,2)}] (0,2) ellipse (1 and 2.63);
\draw (-2.1,0) -- (-0.6,0);
\draw (0,2) -- (-2.1,0);
\draw (0,2) -- (-0.6,0);
\draw [dotted] (-0.6,0) -- (3.2,3.7);
\draw [dotted] (-2.1,0) -- (-1.8, 2.45);
\node at (-2.25,-0.4) {$F_j$};
\node at (-2.7,1.2) {$F_i$};
\node at (1.55,1.4) {$F_k$};
\draw (0,2) -- (-1.5,0);

\draw[->] (-1.5,0) -- (-1.5,-0.8);
\node at (-1.2,-0.55) {$\nu_j$};

\draw [->] (2.3,2) arc (150:30:1.5);
\node at (3.5,3) {$L$};
\node at (1.5,3.3) {$K$};
\node at (7.2,3.3) {$LK$};

\draw (7,2) -- (6.1,0);
\draw (7,2) -- (7.6,0);
\draw [rotate around={-5:(7,2)}] (7,2) ellipse (1 and 2);
\draw (6.1,0) -- (7.6,0);
\draw [ dotted]  (6.1,0) -- (5.75,3.5);
\draw [dotted]  (7.6,0) -- (8.4, 2.8);
\node at (5.9,-0.4) {$LF_j$};
\node at (5.3,1) {$LF_i$};
\node at (8.7,1.5) {$LF_k$};
\draw (7,2) -- (6.9,0);
\draw[->] (6.9,0) -- (6.9,-0.8);
\node at (7.2,-0.55) {$\nu_j$};
\end{tikzpicture}

\caption{The linear transformation $L$} \label{fig2}
\end{figure}

We show that our choice of the linear application $L$ allows us to describe the subdomain $L\O_j$ as
$$L\O_j=\big\{y+r\nu_j\ :\ y\in LF_j,\ r\in[0,\rho_j(y)]\big\}.$$
For this we need only to prove that $L\O_j$ is contained in the cylinder $C_j=LF_j\times \R\nu_j$, since $L\O_j$ is a convex set. We start noticing that, since $\dist_K(x,F_j)=\dist_{LK}(Lx,LF_j)$,
$$L\O_j=\big\{x\in L\O\ :\ d_{LK}(x,LF_j)\le d_{LK}(x,LF_l),\mbox{ for every }l\ne j\big\}.$$

Suppose, by contradiction, that there exists a point $\overline x\in L\O_j\setminus C_j$. Let $\overline r$ be the largest number such that $\overline x +\overline rLK\subseteq L\O$. It follows from the definition of $L\O_j$, that there exists a point $\overline y\in\overline x +\overline rLK\cap LF_j$.

Since $\overline x +\overline rLK\subseteq L\O$, the normal cone of $L\O$ is contained in the normal cone of $\overline x + \overline rLK$ at the point $\overline y$, and then $\nu_j$ is the outer normal unit to $\overline x +\overline rLK$ at $\overline y$. Therefore $\overline x=\overline y+\overline r\nu_j\in C_j$.

Now we compute
\[\begin{split}
\int_{\O_j}u_j^\ep(x)\,dx
&=\int_{L\O_j}u_j^\ep(L^{-1}y)\,dy\\
&=\int_{L\O_j}\left(-\frac{\dist^2(x,F_j)}{2h_{K}^2(\nu_j)}(1+\ep)+\frac{R_\O^K\dist(x,F_j)}{h_{K}(\nu_j)}(1+2\ep)\right)dx.
\end{split}\]
We used \eqref{determinante} for the first equality, we use the fact that $\dist_K(x,F_j)=\dist_{LK}(Lx,LF_j)$ and $h_K(\nu_j)=h_{LK}(\nu_j)$ for the second one. Finally
\[\begin{split}
\int_{\O_j}u_j^\ep(x)\,dx
&=\int_{L\O_j}\left(-\frac{\dist^2(x,F_j)}{2h_{K}^2(\nu_j)}+\frac{R_\O^K\dist(x,F_j)}{h_{K}(\nu_j)}\right)dx+O(\ep)\\
&=\int_{LF_j}\int_0^{\rho_j(y)}\left(-\frac{\dist^2(y+r\nu_j,F_j)}{2h_K^2(\nu_j)}+\frac{R_\O^K\dist(y+r\nu_j,F_j)}{h_K(\nu_j)}\right)dr\,dy + O(\ep)\\
&=\int_{LF_j}\int_0^{\rho_j(y)}\left(-\frac{r^2}{2h_K^2(\nu_j)}+\frac{R_\O^Kr}{h_K(\nu_j)}\right)dr\,dy + O(\ep)\\
&=\int_{LF_i}\left(-\frac{\rho_j^3(y)}{6h_K^2(\nu_j)}+\frac{R_\O^K\rho_j^2(y)}{2h_K(\nu_j)}\right)dy+O(\ep).
\end{split}\]
Since $\rho_j(y)\le R_\O^Kh_K(\nu_j)$, for every $y\in LF_j$, then
\be\label{2degeq}
-\frac{\rho_j^2}{6h_K^2(\nu_j)}+R_\O^K\frac{\rho_j(y)}{2h_K(\nu_j)} \le \frac13\left(R_\O^K\right)^2.
\ee
Using \eqref{2degeq} we find
\[\begin{split}
\int_{\O_j}u_j^\ep(x)\,dx
&\le\int_{LF_j}\frac{\rho_j(y)\left(R_\O^K\right)^2}{3}\,dy+O(\ep)\\
&=\frac13\left(R_\O^K\right)^2 |L\O_j|+O(\ep).
\end{split}\]
Since $|L\O_j|=|\O_j|$, we get
\[\begin{split}
T^K(\O)&\le\int_\O u^\ep\,dx=\sum_j\int_{\O_j}u_j^\ep\,dx\\
&\le\frac13\left(R_\O^K\right)^2\sum_j|\O_j|+O(\ep)=\frac13\left(R_\O^K\right)^2|\O|+O(\ep)
\end{split}\]
as required.
\end{proof}

{\bf Remark.} The most important inequality playing a role in the proof of the theorem above is \eqref{2degeq}. This inequality must be strict for some boundary point, unfortunately this is not enough to conclude that equality in \eqref{upboundtors} is never attained, because our proof is based on an approximation procedure.
In order to get the strict sign in \eqref{upboundtors} we must to take into account and estimates the reminder terms in \eqref{2degeq}.

\[
-\frac{\rho_j^2}{6h_K^2(\nu_j)}+R_\Omega^K\frac{\rho_j(y)}{2h_K(\nu_j)} = \frac13\left(R_\Omega^K\right)^2 -\frac1 6\left(R_\Omega^K-\frac{\rho_j}{h_K(\nu_j)}\right)^2+\frac{\rho_j R_\Omega^K}{6h_K(\nu_j)}-\frac{(R_\Omega^K)^2}{6}
\]
\[
\le \frac13\left(R_\Omega^K\right)^2+\frac{\rho_j R_\Omega^K}{6h_K(\nu_j)}-\frac{(R_\Omega^K)^2}{6}.
\]

Arguing as in the proof above we can conclude that

\[
\int_{\Omega_j}v\le \frac13\left(R_\Omega^K\right)^2|\Omega|+\frac 1 6 \int_{F_j}\left(\frac{\rho_j^2 R_\Omega^K}{h_K(\nu_j)}-\rho_j(R_\Omega^K)^2\right)dy
\]
\[
= \frac13\left(R_\Omega^K\right)^2|\Omega_j|+\frac1 6\int_{\Omega_j}\left(2R_\Omega^K\mathrm{dist}_K(x,F_j)-(R_\Omega^K)^2\right)dx.
\]
Summing over $j$ we find
\begin{equation}\label{nonexistence}
T^K(\Omega)\le  \frac13\left(R_\Omega^K\right)^2|\Omega_j|+\frac{R_\Omega^K}{6}\int_\Omega \left(d_K(x,\partial \Omega)-R_\Omega^K\right) dx.
\end{equation}

Taking into account Equation \eqref{nonexistence} in order to show that the minimum in \eqref{upboundtors} is never attained we have just to show that, for every convex body $\Omega$
\[
2\int_\Omega d_K(x,\partial \Omega)< R_\Omega^K|\Omega|.
\]

Let us denote by $\Omega_t=\{x\in\Omega\,:\, d_K(x,\partial\Omega)\ge t\}$. Since $|\nabla d_K(x)|=h_K(\nu_t(x))^{-1}$, where $\nu_t(x)$ is the outer normal to $\partial\Omega_t$ at $x$,
thanks the co-area formula we can write
\[
\int_\Omega d_K(x,\partial \Omega)=\int_0^{R_\Omega^K}\int_{\partial\Omega_t}d_k(x)h_K(\nu_t(x))\,d\mathcal H^{N-1}(x) dt=\int_0^{R_\Omega^K}tP_K(\Omega_t)\,dt,
\]
where $P_K$ denotes the anisotropic perimeter.\\
On the other hand 
\[
\int_0^{R_\Omega^K}d_K(x,\partial \Omega)\,dx=\int_0^{R_\Omega^K}|\Omega_t|\,dt.
\]
As it is well known $tP_K(\Omega_t)+|\Omega_t|<|\Omega_t+tK|$; this fact can be seen as a consequence of the characterization of mixed volumes (see, for instance \cite[Theorem 5.1.7]{Scn}).
Since $\Omega_t+tK\subseteq \Omega$, we have
\[
2\int_\Omega d_K(x,\partial \Omega)<\int_0^{R_\Omega^K}|\Omega_t+tK|\,dt\le\int_0^{R_\Omega^K}|\Omega|\,dt=R_\Omega^K|\Omega|.
\]

Nonetheless, the following example tells us that \eqref{upboundtors} is sharp.

\begin{prop}\label{rect}
Let $\O^\varepsilon$ the rectangle $[-\varepsilon,\ep]\times[-a_2,a_2]\times\ldots\times[-a_N,a_N]$. Then
$$\lim_{\ep\to 0^+}\frac{T^K(\O^\ep)}{\left(R^K_{\O^\ep}\right)^2|\O^\ep|}=\frac13\;.$$
\end{prop}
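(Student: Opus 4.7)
The upper bound $T^K(\O^\ep)\le \frac{1}{3}(R^K_{\O^\ep})^2|\O^\ep|$ already holds for every $\ep>0$ thanks to Theorem \ref{upt}, so the content of the proposition is the matching asymptotic lower bound
\[
\liminf_{\ep\to 0^+}\frac{T^K(\O^\ep)}{(R^K_{\O^\ep})^2|\O^\ep|}\ge \frac{1}{3}.
\]
My plan is to obtain this through the variational characterization \eqref{torsion} of $T^K$, by testing it against a function modelled on the one-dimensional anisotropic torsion profile in the thin direction.

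A preliminary geometric observation: for $\ep$ sufficiently small the only constraint on fitting a translate of $tK$ inside $\O^\ep$ comes from the $e_1$-direction, and since $K$ is symmetric the projection of $K$ on $\R e_1$ has length $2h_K(e_1)$; hence $R^K_{\O^\ep}=\ep/h_K(e_1)$, $|\O^\ep|=2\ep V$ with $V:=\prod_{j\ge 2}2a_j$, and $(R^K_{\O^\ep})^2|\O^\ep|=2\ep^3 V/h_K^2(e_1)$. The natural candidate $x\mapsto(\ep^2-x_1^2)/(2h_K^2(e_1))$ does not vanish on the lateral faces $\{x_j=\pm a_j\}$, so it is not admissible. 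To fix this, for each $\delta>0$ I would pick a smooth cut-off $\phi_\delta(x_2,\dots,x_N)$ supported in $\prod_{j\ge 2}[-a_j,a_j]$ and identically $1$ on $\prod_{j\ge 2}[-a_j+\delta,a_j-\delta]$, and set
\[
u_{\ep,\delta}(x):=\frac{\ep^2-x_1^2}{2h_K^2(e_1)}\,\phi_\delta(x_2,\dots,x_N)\in W^{1,2}_0(\O^\ep).
\]
A direct computation gives $\int_{\O^\ep} u_{\ep,\delta}\,dx = \frac{2\ep^3}{3h_K^2(e_1)}\int\phi_\delta\,dx'$. Since $h_K$ is a norm, the triangle inequality yields
\[
h_K(\nabla u_{\ep,\delta})\le |\partial_{x_1}u_{\ep,\delta}|\,h_K(e_1) + h_K\!\Big(\sum_{j\ge 2}\partial_{x_j}u_{\ep,\delta}\,e_j\Big) = \frac{|x_1|\phi_\delta}{h_K(e_1)}+O(\ep^2),
\]
the remainder (with a constant depending on $\delta$) coming from the transverse derivatives of $(\ep^2-x_1^2)\phi_\delta$. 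Squaring and integrating then gives $\int_{\O^\ep}h_K^2(\nabla u_{\ep,\delta})\,dx = \frac{2\ep^3}{3h_K^2(e_1)}\int\phi_\delta^2\,dx'+O(\ep^4)$.

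Inserting these estimates into \eqref{torsion} and dividing by $(R^K_{\O^\ep})^2|\O^\ep|$ yields
\[
\liminf_{\ep\to 0^+}\frac{T^K(\O^\ep)}{(R^K_{\O^\ep})^2|\O^\ep|}\ge \frac{\big(\int\phi_\delta\,dx'\big)^2}{3V\int\phi_\delta^2\,dx'},
\]
and sending $\delta\to 0^+$ the right-hand side tends to $1/3$ by dominated convergence (since $\phi_\delta\to 1$ pointwise and is dominated by $1$). The only subtle point in the whole argument is imposing the homogeneous boundary condition without destroying the essentially one-dimensional structure of the test function; the cut-off introduces transverse gradient contributions, but these are smaller than the leading $x_1$-derivative term by a factor of $\ep$ and therefore disappear in the limit.
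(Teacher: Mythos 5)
Your proof is correct and follows essentially the same strategy as the paper: both insert into the variational formulation \eqref{torsion} a test function equal to the one-dimensional parabolic profile $(\ep^2-x_1^2)/(2h_K^2(e_1))$ in the thin direction, modulated by a cutoff vanishing on the lateral faces, and show the cutoff's gradient contribution is negligible as $\ep\to 0$. The only cosmetic difference is that you use a smooth cutoff of fixed width $\delta$ and send $\delta\to 0$ after $\ep\to 0$, whereas the paper uses a piecewise-linear cutoff in a lateral boundary layer of width $\ep$.
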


\begin{proof}
Let $\O^\ep=C^\ep\cup D^\ep$, where $C^\ep=[-\varepsilon,\ep]\times[-a_2+\ep,a_2-\ep]\times\ldots\times[-a_N+\ep,a_N-\ep]$, and $D^\ep= {\O^\ep}\setminus C^\ep$, as in Figure \ref{fig1}. Setting $x=(x_1,z)$ with $z\in\R^{N-1}$ and $a=(a_2,\ldots,a_N)$, we consider the function $u^\ep$ defined by
$$\begin{cases}
u^\ep(x_1,z)=\ds\frac{\ep^2-x_1^2}{2h^2_K(e_1)}&\mbox{in }C^\ep\\
u^\ep(x_1,z)=\ds\min\big\{|a-z|,|-a-z|\big\}\frac{\ep^2-x_1^2}{2\ep h^2_K(e_1)}&\mbox{in }D^\ep.
\end{cases}$$

\begin{figure}[h]
\begin{center}
\begin{tikzpicture}
\draw (0,0) rectangle (1,1);
\draw (1,0) rectangle (4,1);
\draw (4,0) rectangle (5,1);
\draw [loosely dotted, gray] (-1,0.5) -- (6,0.5);
\draw [loosely dotted, gray](2.5,-0.5) -- (2.5,2);
\node at (2.8,-0.2) {-$\varepsilon$};
\node at (-0.2,-0.2) {-$a_2$};
\node at (1,-0.2) {-$a_2+\varepsilon$};
\node at (4,-0.2) {$a_2-\varepsilon$};
\node at (2.8,1.2) {$\varepsilon$};
\node at (5.2,-0.2) {$a_2$};
\node at (6,0.2) {$\R$};
\node at (2.8, 2.2) {$\R^{N-1}$};
\node at (0.5,0.5) {$D^\eps$};
\node at (4.5,0.5) {$D^\eps$};
\draw [bend left] (3.1,0.6) to node {} (4.6,2.5);
\node at (4.9, 2.5) {$C^{\varepsilon}$};
\end{tikzpicture}
\end{center}
\caption{The set $\O^\eps=C^\eps \cup D^\eps$} \label{fig1}
\end{figure}

We can estimate the $K$-torsional rigidity
$$T^K(\O^\ep)\ge\frac{\left(\int_{\O^\ep}u^\ep\right)^2}{\int_{\O^\ep} h^2_K(\nabla u^\ep)}=\frac{\left(\int_{C^\ep}u^\ep+\int_{D^\ep}u^\ep\right)^2}{\int_{C^\ep}h^2_K(\nabla u^\ep)+\int_{D^\ep}h^2_K(\nabla u^\ep)}\;.$$
We now compute
$$\int_{C^\ep}u^\ep\,dx=\int_{C^\ep}\frac{\ep^2-x_1^2}{2h^2_K(e_1)}\,dx=\frac{|C^\ep|\ep^2}{3h^2_K(e_1)}$$
and
$$\int_{C^\ep}h^2_K(\nabla u^\ep)\,dx=\int_{C^\ep}\frac{x_1^2}{h^2_K(e_1)}\,dx=\frac{|C^\ep|\ep^2}{3h^2_K(e_1)}\;.$$
We notice that both $\int_{D^\ep}u^\ep\,dx$ and $\int_{D^\ep}H_K(\nabla u^\ep)\,dx$ are negligible, since they go to zero as $\ep^{N+2}$; moreover $|D^\ep|$ is negligible as well, since it goes to zero as $\ep^N$. By recalling that
$$\left(R^K_{\O^\ep}\right)^2=\frac{\ep^2}{h^2_K(e_1)}\;,$$
we have that
$$\lim_{\ep\to0}\frac{T^K(\O^\ep)}{\left(R^K_{\O^\ep}\right)^2|\O^\ep|}
\ge
\lim_{\ep\to0}\frac{\left(\int_{C^\ep}u^\ep\,dx\right)^2}{\left(R^K_{\O^\ep}\right)^2|\O^\ep|\int_{C^\ep}h^2_K(\nabla u^\ep)\,dx}=\frac13$$
which, together with Theorem \ref{upt}, concludes the proof.
\end{proof}

\section{Eigenvalue}\label{eigenvalue}

In this section we extend to the anisotropic case inequalities \eqref{easyest} and \eqref{hersch}. As in the linear case, the upper bound for the principal frequency relies on the monotonicity and scaling laws. As we mentioned, all the best constant in those inequalities do not depend on $K$; in the following proposition we show that $\lambda_1^K(K)=j_0^2$, and this gives the second inequality in \eqref{eigen}.

As is Section \ref{torsione}, we always assume that $K$ is a $C^2_+$ regular symmetric convex body.

\begin{prop}\label{lambdaball}
Let $\O\subset \R^N$ be a convex body, then
$$\lambda_1^K(\O)\le \lambda_1(B) (R_\O^K)^{-2}.$$
\end{prop}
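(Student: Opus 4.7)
The plan is to obtain the stated estimate from the three-step chain
$$\lambda_1^K(\O) \;\le\; \lambda_1^K(R_\O^K K) \;=\; (R_\O^K)^{-2}\,\lambda_1^K(K) \;\le\; (R_\O^K)^{-2}\,\lambda_1(B).$$
The first inequality comes directly from the definition of $R_\O^K$: pick $x_0\in\O$ with $x_0+R_\O^K K\subseteq\O$ and apply the monotonicity \eqref{monotonicity}, together with the obvious translation invariance of $\lambda_1^K$. The middle equality is the scaling law \eqref{scaling}. Hence the whole proof reduces to establishing the single inequality $\lambda_1^K(K)\le\lambda_1(B)$.

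For this I would use as a test function the radial profile of the first Dirichlet eigenfunction on the Euclidean unit ball. Let $\phi\in C^\infty([0,1])$ with $\phi(1)=0$ be such that $u_B(x)=\phi(|x|)$ is a positive first eigenfunction of $-\Delta$ on the unit Euclidean ball $B$, which exists by rotational symmetry of $\Delta$ and which satisfies
$$\lambda_1(B)=\frac{\int_0^1\bigl(\phi'(t)\bigr)^2\,t^{N-1}\,dt}{\int_0^1\phi(t)^2\,t^{N-1}\,dt}.$$
Define the admissible competitor $u(x)=\phi(h_{K^*}(x))$ for $\lambda_1^K(K)$; this vanishes on $\partial K$ since $h_{K^*}\equiv 1$ there. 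A direct computation gives $\nabla u(x)=\phi'(h_{K^*}(x))\,Dh_{K^*}(x)$, and the $K\leftrightarrow K^*$ swapped form of \eqref{hdh}, namely $h_K(Dh_{K^*})=1$, yields
$$h_K^2(\nabla u(x))=\bigl(\phi'(h_{K^*}(x))\bigr)^2.$$

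The remaining step is the reduction of the two $N$-dimensional integrals to $1$-dimensional ones. Exactly as in the proof of Lemma \ref{torsball}, the co-area formula together with the identity $\int_{\partial K}|Dh_{K^*}|^{-1}\,d\HH^{N-1}=N|K|$ gives
$$\int_K f(h_{K^*})\,dx=N|K|\int_0^1 f(t)\,t^{N-1}\,dt$$
for every nonnegative measurable $f$. The factor $N|K|$ cancels in the Rayleigh quotient, so
$$\lambda_1^K(K)\le\frac{\int_K h_K^2(\nabla u)\,dx}{\int_K u^2\,dx}=\frac{\int_0^1(\phi')^2 t^{N-1}\,dt}{\int_0^1\phi^2 t^{N-1}\,dt}=\lambda_1(B),$$
as desired. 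The only delicate point is the behaviour of $u$ near the origin, where $h_{K^*}$ is not differentiable; but $\phi'(0)=0$ for the Bessel profile and $|Dh_{K^*}|$ is $0$-homogeneous hence bounded on $K\setminus\{0\}$, so $\nabla u$ is bounded near the origin and $u\in W_0^{1,2}(K)$ without issue. Beyond that minor check there is no real obstacle; the crucial input is the identity $h_K(Dh_{K^*})=1$, which plays here exactly the role of $|\nabla r|=1$ in the classical Euclidean computation.
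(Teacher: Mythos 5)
Your proof is correct, but it takes a genuinely different route from the paper for the key step $\lambda_1^K(K)\le\lambda_1(B)$. The paper proceeds from the anisotropic eigenfunction on $K$: it invokes the convex-symmetrization result of \cite{aflt} to assume that this eigenfunction has the radial form $g(x)=f(H_K^*(x))$, derives the ODE $f''+Nf'=-\lambda_1^K f$ for $f$, and then observes that $\overline g(x)=f(|x|^2/2)$ is a positive Dirichlet eigenfunction of $-\Delta$ on $B$, concluding the stronger statement $\lambda_1^K(K)=\lambda_1(B)$. You instead transplant the known Euclidean radial profile $\phi$ from $B$ to $K$ via $u=\phi\circ h_{K^*}$ and compute its Rayleigh quotient directly by the co-area reduction, which returns exactly $\lambda_1(B)$. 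Your route is more elementary --- it rests only on the duality identity $h_K(Dh_{K^*})\equiv1$ and the co-area identity $\int_{\partial K}|Dh_{K^*}|^{-1}\,d\HH^{N-1}=N|K|$ already established in Lemma~\ref{torsball}, and requires neither the symmetrization result nor regularity or uniqueness of the anisotropic eigenfunction. The trade-off is that it establishes only the one-sided bound $\lambda_1^K(K)\le\lambda_1(B)$, which is all the Proposition requires, whereas the paper's argument additionally yields the equality cited in the Introduction and relevant to the sharpness discussion for \eqref{eigen}.
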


\begin{proof}
From \eqref{monotonicity} and \eqref{scaling} we have that $$\lambda_1^K(\O)\le\lambda_1^K(K) (R_\O^K)^{-2}.$$
Now have to show that $\lambda_1^K(K)\lambda_1(B)$. Since there exists a positive function $g$ solving
$$-\Delta_K g=\lambda_1^K g\mbox{ in }K,\qquad g=0\mbox{ on }\partial K,$$
we can assume that $g$ is radial, namely $g(x)=f\big(H^*(x)\big)$ (see \cite{aflt}). A direct computation shows that
$$f''(t)+Nf'(t)=-\lambda_1^K f(t)\qquad\hbox{and}\qquad f(1)=0.$$
Indeed
\[\begin{split}
-\lambda_1^K g
&=\Delta_K g=\dive\big(DH_K(\nabla g)\big)=\dive\big(DH_K\big(f'(H_K^*)DH_K^*\big)\big)\\
&=\dive\big(f'(H_K^*)x\big)=f''(H_K^*)DH_K^*(x)\cdot x + N f'(H_K^*).
\end{split}\]
Setting $\overline g(x)= f(|x|^2/2)$ it easy to check that
$$\Delta\overline g=-\lambda_1^K\overline g\mbox{ in }B,\qquad\overline g=0\mbox{ on }\partial B.$$
Since $\overline g>0$, then $g$ is the first Dirichlet eigenfunction of the ball $B$ and thence $\lambda_1^K(K)=\lambda_1(B)$.
\end{proof}

Now we prove three technical lemmas that are useful in the proof of the lower bound for the principal frequency. Lemmas \ref{gale1} and \ref{gale2} are generalizations of a lemma by D. Gale and referred to as a private communication by M.H. Protter in \cite{pro}.

\begin{lemma}\label{gale1}
Let $\O$ be a convex domain, let $K\subseteq\Omega$ be a convex body with differentiable boundary, and suppose that for every $\tau>1$ and $x\in\R^N$, $x+\tau K\not\subseteq\O$. Then there exist $i\in\N$ and $x_1,\ldots,x_i\,\in\partial\O\cap\partial K$ such that, denoted by $\nu_j$ the outer unit normal to $\partial K$ at $x_j$, for every $\nu\in\mathbb S^{N-1}$, there exists, $j$, such that $\nu\cdot\nu_j\ge 0$.
\end{lemma}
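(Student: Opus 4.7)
The plan is to recast the conclusion as a convex-hull condition and argue by contradiction via support functions. Writing $C = \partial K \cap \partial\O$ for the contact set and $A = \{\nu_K(x) : x \in C\} \subset \mathbb{S}^{N-1}$, the hyperplane separation theorem shows that the property demanded of the $\nu_j$ is equivalent to $0 \in \mathrm{conv}\{\nu_1,\ldots,\nu_i\}$: indeed, a representation $0 = \sum \lambda_j \nu_j$ with $\lambda_j \ge 0$ forbids any $\nu$ with $\nu\cdot\nu_j<0$ for all $j$, and conversely if $0\notin\mathrm{conv}\{\nu_j\}$ strict separation yields such a $\nu$. By Carath\'eodory's theorem it therefore suffices to prove $0 \in \mathrm{conv}\, A$, from which at most $N+1$ contact points $x_j$ are then extracted automatically.

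Suppose, for contradiction, that $0 \notin \mathrm{conv}\, A$. Since $C$ is compact and $\nu_K$ is continuous on $\partial K$, $\mathrm{conv}\, A$ is compact, and strict separation furnishes a unit vector $\nu$ together with a constant $c > 0$ such that $\nu \cdot \nu_K(x) \le -c$ for every $x \in C$. I would then aim to produce $s, t > 0$ for which $(1+s)K + t\nu \subseteq \O$, directly contradicting the hypothesis with $\tau = 1+s > 1$. Via the support-function characterization of set inclusion, this amounts to finding $s, t > 0$ satisfying
\[
s\,h_K(\mu) + t\,\nu\cdot\mu \le f(\mu) := h_\O(\mu) - h_K(\mu) \qquad \text{for every } \mu \in \mathbb{S}^{N-1}.
\]

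The main technical point, and where the differentiability of $\partial K$ enters essentially, is the inclusion $\{f = 0\} \subseteq A$: if $h_\O(\mu) = h_K(\mu)$, any maximizer $x^*$ of $y\cdot\mu$ on $K$ satisfies $x^* \cdot \mu = h_\O(\mu)$, so it also maximizes $y \cdot \mu$ on $\O$ and hence lies in $\partial\O$; differentiability then gives $\mu = \nu_K(x^*)$, so $\mu \in A$. Combined with the separation inequality this places $\{f = 0\}$ inside $\{\nu\cdot\mu \le -c\}$, so by continuity and compactness $f$ has a strictly positive lower bound $\epsilon > 0$ on the closed set $\{\mu \in \mathbb{S}^{N-1} : \nu\cdot\mu \ge -c/2\}$. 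Setting $M = \max_{\mathbb{S}^{N-1}} h_K$, choosing $s = tc/(2M)$, and taking $t$ small enough that $sM + t \le \epsilon$, the required inequality is then verified by splitting $\mathbb{S}^{N-1}$ into the two regions $\{\nu\cdot\mu \ge -c/2\}$ (where the left-hand side is bounded by $sM + t \le \epsilon \le f(\mu)$) and $\{\nu\cdot\mu < -c/2\}$ (where the translation term dominates and the left-hand side is bounded by $sM - tc/2 = 0 \le f(\mu)$), yielding the desired contradiction.
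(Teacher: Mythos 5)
Your proof is correct, and it takes a genuinely different route from the paper's. The paper argues directly with the covering: for each $\nu\in\mathbb S^{N-1}$ it translates $K$ by $\ep\nu$, observes that the translate cannot be compactly contained in $\O$ (else a slight dilation would fit), extracts a contact point $x^\ep$ and passes to the limit $\ep\to0$ to produce $x\in\partial K\cap\partial\O$ with $\nu_K(x)\cdot\nu\ge0$. This shows the closed half-spheres $C_x=\{\nu:\nu\cdot\nu_K(x)\ge0\}$ cover $\mathbb S^{N-1}$; since closed covers need not admit finite subcovers, the paper then runs an induction on the dimension, splitting off the boundary sphere $\mathbb S^{N-1}\cap\nu^\perp$ to extract a finite subfamily. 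You instead reformulate the target as the single condition $0\in\mathrm{conv}\,A$, where $A=\nu_K(\partial K\cap\partial\O)$, reduce it to finitely many normals by Carath\'eodory (so you even get the explicit bound $i\le N+1$, which the paper does not state), and prove $0\in\mathrm{conv}\,A$ by contradiction: strict separation of $0$ from the compact set $\mathrm{conv}\,A$ produces a direction $\nu$ uniformly pointing away from all contact normals, and then a quantitative support-function estimate on $f=h_\O-h_K$ (using $\{f=0\}\subseteq A$, which is where the differentiability of $\partial K$ enters) yields $s,t>0$ with $(1+s)K+t\nu\subseteq\O$, contradicting maximality. The two proofs use the hypothesis at dual points: the paper uses it locally for each direction $\nu$, producing a sphere cover and then fighting to make it finite; you use it once globally, via the support-function inclusion criterion, and the finiteness comes for free from Carath\'eodory. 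Your argument is arguably more transparent and yields the sharper cardinality bound, at the price of a slightly heavier reliance on convex separation machinery; the paper's is more elementary in spirit but requires the induction step (which, incidentally, contains a typo, writing $\cap$ where it should be $\cup$ in the final line).
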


\begin{proof}
Let $\nu\in\mathbb S^{N-1}$; for $\ep>0$, we consider the set $K^\ep=K+\ep\nu$. Notice that $K^\ep$ cannot be a proper subset of $\Omega$, since, otherwise, also a small dilation of $K^\ep$ should be, against our assumptions. Then, there exists $\mu^\ep\in\mathbb S^{N-1}$ such that
$$h_\O(\mu^\ep)\le h_{K^\ep}(\mu^\ep)=h_K(\mu^\ep)+\ep\mu^\ep\cdot\nu.$$ Notice that $\mu^\ep$ is the outer unit normal to $\partial K^\ep$ at some point $x^\ep\in K^\ep\setminus \O$. Since $K\subseteq\O$, then $h_K\le h_\O$, and then $\mu^\ep\cdot\nu\ge 0$.

Up to extract a subsequence we can always assume that, as $\ep$ goes to $0$, $\mu^\ep$ converges to some unit vector $\mu$ and $x^\ep$ converges to some point $x\in\partial K\cap \partial \O$ with the property that $\mu\cdot \nu\ge 0$.

We have proved that $C_x=\big\{\nu\in\mathbb S^{N-1}\ :\ \nu\cdot\nu_x\ge0\big\}$, $x\in\partial K\cap\partial\O$ is a covering of the unit sphere; we are left to show that we can extract from $C_x$ a finite covering of the sphere.

We proceed by induction on the dimension $N$. If $N=1$ there is nothing to prove, since the sphere $\mathbb S^0$ is a finite set. Let now $N>1$ and let $O_x=\big\{\nu\in\mathbb S^{N-1}\ :\ \nu\cdot\nu_x>0\big\}$. If $\mathbb S^{N-1}\subseteq\cup_{x}O_x$, then, by compactness, we can extract a finite covering of the sphere $\mathbb S^{N-1}$. Otherwise, there exists a unit vector $\nu$, such that $\nu\cdot\nu_x\le 0$, for every $x\in\partial K\cap \partial \O$.

Let $A=\big\{x\in\partial K\cap\partial\O\ :\ \nu_x\cdot\nu=0\big\}$; we claim that $\{C_x\}_{x\in A}$ is a covering of $\mathbb S^{N-1}\cap\nu^\perp=\mathbb S^{N-2}$. Let $\eta\in\mathbb S^{N-1}$, such that $\eta\cdot\nu=0$ and let, for $\ep>0$, $\eta^\ep=\sin(\ep)\eta+\cos(\ep)\nu$. Let $x^\ep\in\partial K\cap\partial\O$, such that $\nu_{x^\ep}\cdot\eta^\ep\ge0$. Since $\nu_{x^\ep}\cdot\nu\le 0$, we must have $\nu_{x^\ep}\cdot\eta\ge0$.

Up to extracting a subsequence $x^\ep$ converges to some point $x$. Clearly $\nu_x\cdot\nu\ge0$ and, since $\partial K\cap\partial\O$ is closed, also $\nu_x\cdot\nu\le0$ and thus $x\in A$. The fact that $\nu_x\cdot\eta\ge0$ proves our claim.

We can finally apply our induction hypothesis to find a finite subset $A'\subseteq A$ such that $\mathbb S^{N-2}\subset\cap_{x\in A'}C_x$, and it is straightforward to check that also $\mathbb S^{N-2}\subset\bigcap_{x\in A'}C_x$.
\end{proof}

\begin{lemma}\label{gale2}
Let $\O$ and $K$ as above, then there exists a polyhedral convex domain $T$ such that $\O\subseteq T$ and, for every $\tau>1$ and $x\in\R^N$, $x+\tau K\not\subseteq\O$. 
\end{lemma}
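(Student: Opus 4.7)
The plan is to take $T$ as the intersection of the finitely many supporting half‑spaces of $\O$ corresponding to the contact points produced by Lemma~\ref{gale1}. After a simultaneous translation of $K$ and $\O$, I may assume $0\in\mathrm{int}\,K$, so that $h_K>0$ on $\mathbb S^{N-1}$.

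The first step is a geometric observation: at each contact point $x_j\in\partial K\cap\partial\O$, the outer unit normal $\nu_j$ to $\partial K$ is \emph{also} an outer normal to $\O$ at $x_j$. Indeed, any supporting hyperplane to $\O$ at $x_j$ contains $K$ in its closed half‑space (since $K\subseteq\O$ and $x_j\in K$), hence is a supporting hyperplane to $K$ at $x_j$; by differentiability of $\partial K$ the only such hyperplane is the tangent plane at $x_j$, whose outer normal is $\nu_j$. Using $x_j\cdot\nu_j=h_K(\nu_j)$, I would then set
\[
H_j^-:=\{y\in\R^N:y\cdot\nu_j\le h_K(\nu_j)\},\qquad T:=\bigcap_{j=1}^i H_j^-,
\]
intersecting with a large cube if one requires $T$ bounded (shrinking $T$ only strengthens the next property). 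By construction $T$ is a polyhedral convex domain and $\O\subseteq T$.

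The remaining task is to show that no dilated translate of $K$ with $\tau>1$ fits inside $T$. If $x+\tau K\subseteq T$, then testing with the support function along each $\nu_j$ yields
\[
x\cdot\nu_j+\tau h_K(\nu_j)=h_{x+\tau K}(\nu_j)\le h_K(\nu_j),
\]
so $x\cdot\nu_j\le (1-\tau)h_K(\nu_j)<0$ for every $j=1,\dots,i$; in particular $x\ne 0$. Applying Lemma~\ref{gale1} to the unit vector $\nu=x/|x|$ produces some $j$ with $\nu\cdot\nu_j\ge 0$, i.e.\ $x\cdot\nu_j\ge 0$, contradicting the strict negativity. The case $x=0$ is immediate: $\tau h_K(\nu_j)\le h_K(\nu_j)$ with $h_K(\nu_j)>0$ is impossible for $\tau>1$.

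The main conceptual point—and the step that deserves care—is the identification of $\nu_j$ as an outer normal to $\O$ rather than merely to $K$; once this is in hand the rest is a routine support‑function comparison, and the finiteness of directions extracted in Lemma~\ref{gale1} is exactly what is needed to rule out any enlarged translate of $K$ inside the polyhedron $T$.
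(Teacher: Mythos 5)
Your proof is correct and follows essentially the same route as the paper: you build $T$ as the intersection of the supporting half-spaces at the contact points from Lemma~\ref{gale1} (your $H_j^-=\{y:(y-x_j)\cdot\nu_j\le 0\}$ is the paper's $\{x:(x-x_i)\cdot\nu_i\le 0\}$), and you rule out an enlarged translate by a support-function comparison in the direction $\nu_j$ supplied by Lemma~\ref{gale1}, whereas the paper exhibits the explicit witness point $x'_j=\tau x_j+\overline x$. Your extra observation that $\nu_j$ is automatically an outer normal to $\Omega$ at $x_j$ (hence each $H_j^-$ supports $\Omega$) makes explicit a step the paper leaves implicit, but the argument is the same.
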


\begin{proof}
Let $x_1,\ldots,x_i$ and $\nu_1,\ldots,\nu_i$ be as in Lemma \ref{gale1}.
We set $T=\bigcap \{x\,:\,(x-x_i)\cdot \nu_i\le0\}$.
Since $T$ is the intersection of supporting half-spaces of $\O$, then $\O\subseteq T$.

Suppose now, that there exists $\overline x$, $\tau>1$ such that $\overline x+\tau K\subseteq T$. Let $j$ be such that $\nu_j\cdot\overline x\ge0$. The point $x'_j=\tau x_j+\overline x\in\overline x+\tau K$ verifies
$$(x'_j-x_j)\cdot\nu_j\ge (\tau-1)x_j\cdot \nu_j>0,$$
hence it cannot belong to $T$.
\end{proof}

\begin{lemma}\label{linear}
For every invertible linear mapping $L:\R^N\to\R^N$ we have
$$\lambda_1^K(\O) = \lambda_1^{LK}(L\Omega).$$
\end{lemma}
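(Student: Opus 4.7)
The plan is to show that the linear change of variables $y = Lx$ establishes a bijection between admissible test functions on $\Omega$ and on $L\Omega$ which preserves the Rayleigh quotient in \eqref{lambda1}. Since the quantities $\lambda_1^K(\Omega)$ and $\lambda_1^{LK}(L\Omega)$ are both defined as minima of such quotients, equality of the quotients pointwise forces equality of the minima.

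First I would set up the correspondence: to every $u \in W_0^{1,2}(\Omega) \setminus \{0\}$ associate $v \in W_0^{1,2}(L\Omega) \setminus \{0\}$ defined by $v(y) = u(L^{-1}y)$. By the chain rule,
\[
\nabla v(y) = (L^{-1})^T \nabla u(L^{-1}y).
\]
Next, the key algebraic observation is the behavior of the support function under the linear map: directly from its definition,
\[
h_{LK}(x) = \sup_{z \in LK} z \cdot x = \sup_{w \in K} (Lw)\cdot x = \sup_{w \in K} w \cdot (L^T x) = h_K(L^T x).
\]
Plugging in $x = \nabla v(y)$, the transposes cancel:
\[
h_{LK}(\nabla v(y)) = h_K\bigl(L^T(L^{-1})^T \nabla u(L^{-1}y)\bigr) = h_K(\nabla u(L^{-1}y)).
\]

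Now apply the change of variables $y = Lx$ (with Jacobian $|\det L|$) to both the numerator and denominator of the Rayleigh quotient for $v$ on $L\Omega$:
\[
\int_{L\Omega} h_{LK}^2(\nabla v(y))\,dy = |\det L|\int_\Omega h_K^2(\nabla u(x))\,dx, \qquad \int_{L\Omega} v^2(y)\,dy = |\det L|\int_\Omega u^2(x)\,dx.
\]
The factor $|\det L|$ cancels, so the two Rayleigh quotients coincide for every test function. Taking the minimum yields $\lambda_1^K(\Omega) = \lambda_1^{LK}(L\Omega)$.

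There is really no substantial obstacle: everything reduces to the identity $h_{LK}(x)=h_K(L^T x)$ and the compatibility between $(L^{-1})^T$ in the gradient transformation and $L^T$ in the support function. The only point worth verifying carefully is that the map $u \mapsto v$ is a bijection from $W_0^{1,2}(\Omega)\setminus\{0\}$ onto $W_0^{1,2}(L\Omega)\setminus\{0\}$, which is immediate since $L$ is invertible and Lipschitz together with its inverse.
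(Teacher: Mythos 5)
Your proof is correct and takes essentially the same approach as the paper: both establish the identity $h_{LK}(x)=h_K(L^Tx)$, transform the gradient under the substitution $v(y)=u(L^{-1}y)$, observe the cancellation $h_{LK}(\nabla v)=h_K(\nabla u\circ L^{-1})$, and conclude by the change of variables formula.
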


\begin{proof}
By the definition of support function, we have
$$h_{L^{-1}K}(x)=\sup_{y\in L^{-1}K}\langle y,x\rangle=\sup_{z\in K}\langle L^{-1}z,x\rangle=\sup_{z\in K}\langle z,L^{-t}x\rangle=h_K(L^{-t}x),$$
where $L^{-t}$ denotes the inverse of the transpose of $L$.\\
Denoting by $v(x)=u(L^{-1}x)$, again by a simple computation we have
$$\nabla v(x)=L^{-t}\nabla u(L^{-1}x)$$
and
$$h_{LK}(\nabla v(x))=h_K(L^t\nabla v(x))=h_K(\nabla u(L^{-1}x)).$$
Then, using the computation above and the change of variable formula, we have
$$\frac{\int_{L\O}h^2_{LK}(\nabla v)\,dx}{\int_{L\O}v^2\,dx}
=\frac{\int_{L\O}h^2_K(\nabla u\circ L^{-1})\,dx}{\int_{L\O}(u\circ L^{-1})^2\,dx}=\frac{\int_\O h^2_K(\nabla u)\,dx}{\int_\O u^2\,dx},$$
proving the lemma.
\end{proof}

We are now ready to prove the validity of \eqref{eigen}. The proof is carried out in the same spirit of the proof of the result achieved by Hersch in \cite{her}, even if the introduction of the anisotropy obliges us to avoid to use Hersch's estimates and to adapt the proof.

\begin{teo}
Let $\O\subset\R^N$ be a convex body, then
\[
\lambda_1^K(\O)\ge \frac{\pi^2}{4}({R_\O^K})^{-2}.
\]
\end{teo}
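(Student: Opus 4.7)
My plan is to prove the lower bound by a Barta-type supersolution argument, using the anisotropic distance to the boundary as the auxiliary function, rather than a Hersch--Protter slicing argument based on Lemmas \ref{gale1}--\ref{linear}. Set $R=R_\O^K$, $d(x)=\dist_K(x,\partial\O)$ (which by \eqref{edist} equals $\sup\{t\ge 0 : x+tK\subseteq\O\}$ since $K$ is symmetric), and define
\[
\phi(x)=\sin\!\Big(\frac{\pi d(x)}{2R}\Big),
\]
which vanishes on $\partial\O$ and is strictly positive in $\O$. The strategy is to establish the pointwise differential inequality
\[
-\Delta_K\phi\ \ge\ \frac{\pi^2}{4R^2}\,\phi \qquad \text{in } \O,
\]
in a suitable weak sense, and then to deduce the Rayleigh quotient bound by integration by parts. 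Concretely, for $u\in W_0^{1,2}(\O)$ I would test with $v=u^2/\phi$, integrate by parts, and apply the pointwise inequality $|\nabla u\cdot DH_K(\nabla\phi)|\le h_K(\nabla u)h_K(\nabla\phi)$ (a consequence of the anisotropic Cauchy--Schwarz $|a\cdot b|\le h_K(a)h_{K^*}(b)$ combined with \eqref{hdh}, which gives $h_{K^*}(DH_K(w))=h_K(w)$) together with Young's inequality to obtain $\int_\O h_K^2(\nabla u)\ge \frac{\pi^2}{4R^2}\int_\O u^2$.

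The differential inequality for $\phi$ rests on two facts about $d$. The first is the anisotropic eikonal identity $h_K(\nabla d)=1$ a.e., which follows from $d$ being $1$-Lipschitz with respect to $h_{K^*}$ together with the duality relations of Section \ref{prel}. The second, and the key ingredient, is that $d$ is \emph{concave} on $\O$: if $x_i+d(x_i)K\subseteq\O$ for $i=1,2$ and $x=\lambda x_1+(1-\lambda)x_2$, convexity of $\O$ gives
\[
x+\big(\lambda d(x_1)+(1-\lambda)d(x_2)\big)K=\lambda\big(x_1+d(x_1)K\big)+(1-\lambda)\big(x_2+d(x_2)K\big)\subseteq \O,
\]
so $d(x)\ge \lambda d(x_1)+(1-\lambda)d(x_2)$. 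Concavity forces $D^2 d\le 0$ a.e., and since $D^2H_K\ge 0$ this yields $\Delta_K d=\mathrm{tr}\bigl(D^2H_K(\nabla d)\,D^2 d\bigr)\le 0$ (as a nonpositive measure). A direct computation using $1$-homogeneity of $DH_K$ and Euler's identity $\nabla d\cdot DH_K(\nabla d)=2H_K(\nabla d)=h_K^2(\nabla d)=1$ then gives
\[
-\Delta_K\phi=\frac{\pi^2}{4R^2}\,\phi-\frac{\pi}{2R}\cos\!\Big(\frac{\pi d}{2R}\Big)\,\Delta_K d,
\]
and since $0\le d\le R$ the cosine factor is nonnegative, so the correction term is nonnegative and the required inequality holds.

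The main technical obstacle is regularity: $d$ is only concave (and hence only twice differentiable a.e.), so $\phi$ is not classically $C^2$ and $\Delta_K d$ must be interpreted in the measure sense; moreover $\phi$ vanishes on $\partial\O$, which makes the quotient $u^2/\phi$ delicate. I would address this either by approximating $d$ from above by a decreasing sequence of smooth concave functions (for instance via inf-convolutions with small quadratics, which preserve concavity) and passing to the limit in the Barta chain of inequalities, or by running the argument on the subdomains $\{d>\epsilon\}$ with $\phi+\epsilon$ in place of $\phi$ and then letting $\epsilon\to 0$. In either approach the sign condition $\Delta_K d\le 0$ is what guarantees that no wrong-sign error terms survive in the limit.
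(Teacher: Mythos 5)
Your argument is a genuinely different route from the paper's. The paper proves the lower bound by a Hersch--Protter slicing scheme: it approximates $\O$ by a circumscribed polytope $T$ with the same anisotropic inradius (this is the content of Lemmas~\ref{gale1}--\ref{gale2}), decomposes $T$ into pyramids $T_j$ over the facets, transforms each pyramid by the shear $L$ of Lemma~\ref{linear} so it becomes a graph over its base, and then applies the one-dimensional inequality \eqref{eig1d} along the fibres, using $h_{LK}^2(\nabla f)\ge h_{LK}^2(\nu_j)(\nabla_{\nu_j}f)^2$. You instead build a global anisotropic supersolution $\phi=\sin(\pi d/(2R))$ and run Barta. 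Both routes use the same structural input (convexity of $\O$, duality $h_{K^*}(DH_K(w))=h_K(w)$, one--dimensional comparison), but yours trades the polyhedral approximation and the two Gale-type lemmas for the single geometric fact that the anisotropic distance $d$ is concave. That is a real simplification: the paper needs the extraction of a finite family of supporting normals and the ``graph over the facet'' property of $L\O_j$, which you avoid entirely. Your computation of $\Delta_K\phi$, the eikonal identity $h_K(\nabla d)=1$, and the Barta chain (test $u^2/\phi$ for $u\in C_c^\infty(\O)$, then $|\nabla u\cdot DH_K(\nabla\phi)|\le h_K(\nabla u)h_K(\nabla\phi)$ and Young) are all correct; also note that Barta does not require $\phi=0$ on $\partial\O$, only $\phi>0$ inside and compact support of $u$, so the ``delicate quotient $u^2/\phi$'' issue you flag is actually not an obstacle.

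Two small caveats on the regularization discussion. First, the ``inf-convolution with small quadratics'' you invoke does \emph{not} preserve concavity; what preserves concavity and approximates $d$ from above is the \emph{sup}-convolution $d_\ep(x)=\sup_y\{d(y)-\tfrac1{2\ep}|x-y|^2\}$ (the Moreau--Yosida regularization of $-d$), and even there one must replace $R$ by $R_\ep=\max d_\ep$ so that $\cos(\pi d_\ep/(2R_\ep))\ge0$ persists. Second, the cleanest way to close the gap is not to smooth $d$ at all: since $d$ is concave, $\Delta_K d=\dive(DH_K(\nabla d))$ is a nonpositive Radon measure (mollify $d$ by convolution, note $d*\rho_\delta$ is smooth and concave so $\Delta_K(d*\rho_\delta)\le0$ pointwise, and pass to the distributional limit using Lipschitz continuity of $DH_K$), and the product-rule computation $\Delta_K\phi=g''(d)\,h_K^2(\nabla d)+g'(d)\,\Delta_K d\le -\tfrac{\pi^2}{4R^2}\phi$ with $g(t)=\sin(\pi t/(2R))$ holds in the sense of measures. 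Testing this against the Lipschitz, compactly supported function $u^2/\phi$ then runs exactly as in the classical Barta proof. With that adjustment, your proof is complete and, in my view, more transparent than the slicing argument in the text.
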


\begin{proof}

Up to a translation and a dilation of the domain, we can assume that $\O$ satisfies the assumptions of Lemma \ref{gale1} (notice that in this case, $R_\O^K=1$) by repeating the construction explained in Lemmas \ref{gale1} and \ref{gale2} we find a convex polyhedral domain $T\supseteq\O$ with the same anisotropic unitary inradius. For $j=1,\ldots,i$ we call $T_j$ the pyramidal domain obtained by taking the convex hull of the origin and the $j$-th facet $F_j$ of $T$, and we set $\O_j=\O\cap T_j$.

Let now $u$ be the first eigenfunction, we compute
\[\begin{split}
\lambda_1^K(\O)&=\frac{\int_\O h^2_K(\nabla u)\,dx}{\int_\O u^2\,dx}=\frac{\sum_j\int_{\O_j}h^2_K(\nabla u)\,dx}{\sum_j\int_{\O_j}u^2\,dx}\\
&\ge\min_j\frac{\int_{\O_j}h^2_K(\nabla u)\,dx}{\int_{\O_j}u^2\,dx}=\min_j\frac{\int_{T_j}h^2_K(\nabla u)\,dx}{\int_{T_j}u^2\,dx}\;.
\end{split}\]
As we shall see, to conclude it is enough to get an estimate from below of the quantity
$$\frac{\int_{T_j}h^2_K(\nabla u)\,dx}{\int_{T_j}u^2\,dx}\;.$$
To this aim, as we did in Theorem \ref{upboundtors}, we consider a linear map $L$ such that $L\llcorner \nu_j^\perp=\mathrm{Id}$ and $L Dh_K(\nu_j)=h_K(\nu_j)\nu_j$. Arguing as in Lemma \ref{linear}, it is easy to show that
$$\frac{\int_{T_j}h^2_K(\nabla u)\,dx}{\int_{T_j}u^2\,dx}=\frac{\int_{LT_j}h^2_{LK}(\nabla f)\,dx}{\int_{LT_j}f^2\,dx}\;,$$
where $f(x)=u(L^{-1}x)$. Moreover it is clear that $LT_j$ is a graph over $F_j$.

For any function $f$ defined on $T_j$ we have
\be\label{stima1}
h^2_{LK}(\nabla f)\ge h^2_{LK}(\nu_j) (\nabla_{\nu_j}f)^2.
\ee
Indeed, since $Dh_K(\nu_j)\in K$, then $h_K(\nu_j)\nu_j\in LK$ and thence, by recalling that $h_K(\nu_j)=h_{LK}(\nu_j),$ we have
$$h_{LK}(\nabla f)\ge h_K(\nu_j)\nu_j\cdot\nabla f=h_{LK}(\nu_j)\nabla_{\nu_j}f.$$
We recall that, for any test function $v(t):[0,\ell]\to\R$, such that $v(0)=0$ it holds true that
\be\label{eig1d}
\frac{\int_0^\ell v'(t)^2\,dt}{\int_0^\ell v(t)^2\,dt}\ge\frac{\pi^2}{4\ell^2}\;.
\ee
Thanks to \eqref{stima1} and \eqref{eig1d}, we can conclude by computing
\[\begin{split}
\frac{\int_{T_j}h^2_K(\nabla u)\,dx}{\int_{T_j}u^2\,dx}
&=\frac{\int_{LF_j}\int_0^{\ell(y)}h^2_K(\nabla f)\,dy\,dt}{\int_{LF_j}\int_0^{\ell(y)}f^2\,dy\,dt}\ge\frac{\int_{LF_j}\int_0^{\ell(y)}h^2_K(\nu_j)\nabla_{\nu_j}f\,dy\,dt}{\int_{LF_j}\int_0^{\ell(y)}f^2\,dy\,dt}\\
&=h^2_K(\nu_j)\left[\int_{LF_j}\frac{\int_0^{\ell(y)}\nabla_{\nu_j}f\,dt}{\int_0^{\ell(y)}f^2\,dt}\frac{\int_0^{\ell(y)}f^2\,dt}{1}\,dy\right]\frac{1}{\int_{LF_j}\int_0^{\ell(y)}f^2\,dy\,dt}\\
&\ge h^2_K(\nu_j)\left[\int_{LF_j}\frac{\pi^2\int_{0}^{\ell(y)}u^2\,dt}{4\ell(y)^2}\right]\frac{1}{\int_{LF_j}\int_0^{\ell(y)}u^2\,dy\,dt}\ge h^2_K(\nu_j)\frac{\pi^2}{4\ell_{\max}^2},
\end{split}\]
where $\ell_{\max}$, by construction, is $R_\O^Kh_K(\nu_j)$.
\end{proof}

\section{Regularity}\label{reg}

In the previous sections we limit ourselves to the case of norms associated to a $C_+^2$-regular convex body $K$. In Section \ref{prel}, we mentioned that this assumption is necessary to make the Finsler Laplace operator uniformly elliptic, nonetheless the definitions of the $K$-torsional rigidity and the $K$-principal frequency via \eqref{torsion} and \eqref{lambda1} make sense even for less regular norms $h_K$. Moreover, several interesting examples of applications for those inequalities in the case of non-Euclidean norms arise when we consider the norms associated to a square, or more in general, $p$-norm of the form
$$\|v\|_p=\left(\sum|v_i|^p\right)^{1/p}.$$

Notice that, for $1\le p<2$ and $p=\infty$, the function $\|\cdot\|_p^2$ is not $C^2$-regular, while for $2<p<\infty$ it is smooth but its Hessian may vanish. Thus all these cases are not covered by the theorems that we proved so far.

In this section we remove any regularity assumption on the convex body $K$. 
This result, contained in Theorem \ref{regularity}, relies on a result by the first author and Dal Maso (see \cite{bdm}) concerning the $\Gamma$-convergence (we refer to \cite{dm93} for an exhaustive review on the topic) of some integral functionals of the form
$$F_h(u)=\int_\O f_h(x,u,Du)\,dx\;,$$
to a functional
$$F(u)=\int_\O f(x,u,Du)\,dx$$
by means of the pointwise convergence of the integrand functions $f_h$ to a function $f$.

For the reader convenience let us state the above mentioned result.

\begin{teo}[\cite{bdm}]\label{thbdm}
Assume that all $f_h=f_h(x,u,p)$ are convex in $p$ and that the sequence $(f)_h$ converges pointwise to a function $f$. Assume that there exist two increasing continuous functions $\mu,\nu:\R^+\to\R^+$, with $\mu(0)=\nu(0)=0$ and $\mu$ concave, such that
$$|f_h(x,u,p)-f_h(y,v,p)|\le\nu(|x-y|)(1+f_h(x,u,p))+\mu(|u-v|),$$
for each $x,y\in\O$, $u,v\in\R$, $p\in\R^n$. Then
$$\Gamma\hbox{-}\lim F_h(u) = F(u).$$
\end{teo}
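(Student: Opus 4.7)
The plan is to establish the two standard $\Gamma$-convergence inequalities for $F_h\to F$ with respect to strong $L^1(\Omega)$ convergence.

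\textbf{$\Gamma$-limsup (recovery sequence).} For a fixed admissible $u$, I would test against the trivial sequence $u_h\equiv u$ and prove
$$\int_\Omega f_h(x,u(x),Du(x))\,dx\longrightarrow \int_\Omega f(x,u(x),Du(x))\,dx.$$
Pointwise convergence of $f_h$ to $f$ gives convergence of the integrands almost everywhere, while an integrable dominant comes from fixing a reference $(x_0,u_0)$ and applying the continuity estimate to bound $f_h(x,u(x),Du(x))$ from above by a multiple of $f_h(x_0,u_0,Du(x))$ plus a bounded term; the convex functions $f_h(x_0,u_0,\cdot)$ converge pointwise to $f(x_0,u_0,\cdot)$, so standard dominated convergence applies.

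\textbf{$\Gamma$-liminf.} Take $u_h\to u$ in $L^1(\Omega)$ with $\liminf_h F_h(u_h)<+\infty$ and, after extraction, with $Du_h$ converging weakly to $Du$ in the appropriate sense. The key step is a localization argument. Fix $\delta>0$ small and partition the product $\Omega\times\R$ (restricted to a bounded region containing the ranges of $u$ and of the $u_h$) into measurable cells $C_k$ of diameter at most $\delta$, each with a reference point $(x_k,u_k)\in C_k$. Letting $E_k=\{x\in\Omega:(x,u(x))\in C_k\}$, the continuity hypothesis, inverted, gives
$$f_h(x,u,p)\ge \frac{f_h(x_k,u_k,p)-\nu(\delta)-\mu(\delta)}{1+\nu(\delta)}.$$
On each cell the integrand $p\mapsto f_h(x_k,u_k,p)$ is convex and converges pointwise to $f(x_k,u_k,\cdot)$; the classical Ioffe--De Giorgi lower semicontinuity theorem for convex integrands in the gradient variable then yields
$$\liminf_h\int_{E_k}f_h(x_k,u_k,Du_h)\,dx\ge \int_{E_k}f(x_k,u_k,Du)\,dx.$$
Summing over $k$, applying the analogous continuity estimate (which is inherited by $f$ as a pointwise limit of $f_h$) to replace $f(x_k,u_k,Du)$ by $f(x,u,Du)$, and finally letting $\delta\to 0$ delivers $\liminf_h F_h(u_h)\ge F(u)$.

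\textbf{Main obstacle.} The delicate ingredient is that the continuity bound has precisely the asymmetric form $\nu(|x-y|)(1+f_h(x,u,p))+\mu(|u-v|)$: the coefficient multiplying the potentially large quantity $f_h$ is a single modulus $\nu$ depending only on spatial displacement and independent of $h$. This is exactly what allows the inversion step to replace $f_h(x,u,p)$ by the reference-point integrand $f_h(x_k,u_k,p)$ at a cost of only a multiplicative factor $1+\nu(\delta)\to 1$. Absent this structure (say, if the coefficient were $h$-dependent or grew superlinearly in $f_h$), no $h$-uniform localization would be available and the standard convex lower semicontinuity machinery could not be brought to bear. Once the localization is in place, the remainder is a routine combination of the convex semicontinuity theorem on each piece and letting the mesh of the partition tend to zero.
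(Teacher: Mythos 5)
The theorem you are asked to prove is not actually proved in the paper: it is quoted verbatim (with attribution) from the 1980 paper of Buttazzo and Dal Maso \cite{bdm}, and the authors state it ``for the reader's convenience'' precisely so that they can invoke it in the proof of Theorem \ref{regularity}. There is therefore no proof of record in this paper to compare your attempt against.

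That said, your sketch follows the standard template for $\Gamma$-convergence of integral functionals (constant recovery sequence for the upper bound; localization of the $(x,u)$-dependence combined with classical lower semicontinuity of convex integrands for the lower bound), and your observation that the multiplicative $1+\nu(\delta)$ factor is exactly what makes the localization $h$-uniform is the right conceptual remark. However, several steps would need substantial shoring up before this became a proof. For the limsup, ``dominated convergence'' is asserted but no dominating function is actually exhibited: bounding $f_h(x,u(x),Du(x))$ by $(1+\nu(\operatorname{diam}\Omega))f_h(x_0,u_0,Du(x))+C$ leaves you with a family $f_h(x_0,u_0,Du(\cdot))$ that converges pointwise (indeed locally uniformly, by convexity) but need not be uniformly integrable without an additional growth hypothesis or a truncation argument; in the generality of the statement, pointwise convergence $F_h(u)\to F(u)$ does not follow automatically. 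For the liminf, you extract a weakly convergent subsequence of $Du_h$, which presupposes a coercivity (equi-growth) condition on the $f_h$ that is not among the listed hypotheses; moreover, the continuity estimate must be applied at $(x,u_h(x))$, not at $(x,u(x))$, so one needs an Egorov-type step to guarantee that $u_h$ stays within distance $\delta$ of the cell reference value $u_k$ on all but a small exceptional set, and the contribution of that exceptional set must be controlled separately. Finally, the appeal to ``Ioffe--De Giorgi'' semicontinuity should be made precise: what is needed is lower semicontinuity of $\int f_h(x_k,u_k,Du_h)\,dx$ along weakly converging $Du_h$ together with the pointwise (hence locally uniform, by convexity) convergence of the frozen integrands, which is a diagonal argument rather than a single off-the-shelf theorem. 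The original reference \cite{bdm} handles these issues through an integral-representation argument for the abstract $\Gamma$-limit followed by identification of the limit integrand, which is a different (and considerably heavier) route than the direct one you are attempting.
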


Our strategy is the following: we approximate a convex body $K$ with a sequence of $C_+^2$-regular convex bodies $K_n$, and we pass to the $\Gamma$-limit the desired inequalities.

\begin{teo}\label{regularity}
Let $K$ and $\O$ be convex bodies, then \eqref{torsion1} and \eqref{eigen} hold true.
\end{teo}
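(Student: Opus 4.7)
The strategy is the one suggested by the authors: approximate $K$ by a sequence $(K_n)$ of $C^2_+$--regular symmetric convex bodies, apply \eqref{torsion1} and \eqref{eigen} to each $K_n$, and pass to the limit. The first step is to choose such an approximating sequence with $K_n\to K$ in the Hausdorff distance; this is standard (one may e.g.\ mollify the support function of $K$ and then intersect with large balls to restore symmetry). Hausdorff convergence of the bodies is equivalent to uniform convergence $h_{K_n}\to h_K$ on $\mathbb S^{N-1}$, and, since $K$ is symmetric and hence contains $0$ in its interior, also gives the Hausdorff convergence of the polar bodies, so that $h_{K_n^*}\to h_{K^*}$ uniformly on $\mathbb S^{N-1}$. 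By $1$-homogeneity, both sequences converge uniformly on compact subsets of $\R^N$, and for $n$ large one has the uniform two-sided estimate $c|p|\le h_{K_n}(p)\le C|p|$ with constants independent of $n$.

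The second step is the convergence of the anisotropic inradii. Using the uniform convergence of $h_{K_n^*}$ together with the fact that $\O$ is bounded, one checks from the definition \eqref{edist} that $\dist_{K_n}(x,\partial\O)\to\dist_K(x,\partial\O)$ uniformly on $\O$, and therefore $R^{K_n}_\O\to R^K_\O$. Analogously, $|\O|$ is unchanged in the limit and so the right-hand sides of \eqref{torsion1} and \eqref{eigen} pass to the limit.

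The third and central step is to show the convergence of the two spectral quantities, and this is where Theorem \ref{thbdm} is used. Set $f_n(x,u,p)=h^2_{K_n}(p)$: these integrands are convex in $p$ (square of a convex nonnegative function), are independent of $(x,u)$ so the continuity hypothesis is trivially verified with $\mu\equiv\nu\equiv 0$, and converge pointwise to $f(x,u,p)=h^2_K(p)$. Hence the functionals $F_n(u)=\int_\O h^2_{K_n}(\nabla u)\,dx$ $\Gamma$-converge on $W^{1,2}_0(\O)$ to $F(u)=\int_\O h^2_K(\nabla u)\,dx$; by the uniform coercivity estimate $h^2_{K_n}(p)\ge c^2|p|^2$, the sequence $(F_n)$ is equi-coercive. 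From here, the convergence of $\lambda_1^{K_n}(\O)\to\lambda_1^K(\O)$ follows from the convergence of the Rayleigh quotients (the $L^2$-normalization constraint is stable under weak $W^{1,2}_0$-convergence), while for the torsional rigidity one rewrites
\[
T^K(\O)=-2\min_{u\in W^{1,2}_0(\O)}\Bigl(\tfrac12\int_\O h^2_K(\nabla u)\,dx-\int_\O u\,dx\Bigr),
\]
which is a continuous perturbation of $F$; $\Gamma$-convergence plus equi-coercivity then gives $T^{K_n}(\O)\to T^K(\O)$.

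Finally one simply passes to the limit: for each $n$, Theorems \ref{lowtors}, \ref{upt} and the results of Section \ref{eigenvalue} apply to $K_n$ and yield \eqref{torsion1} and \eqref{eigen} with $K$ replaced by $K_n$ and $R^K_\O$ by $R^{K_n}_\O$; sending $n\to\infty$ and using the three convergences established above yields the two inequalities for the general convex body $K$. The main subtlety I expect is the torsion case, since $T^K$ is defined as a supremum and the naive bound $T^{K_n}\le T^K$ (or the reverse) is not available from monotonicity in $K$; this is what forces one to go through the variational reformulation and the $\Gamma$-convergence framework rather than relying on a simple inclusion argument.
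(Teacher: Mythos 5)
Your proof follows exactly the paper's strategy: approximate $K$ in the Hausdorff metric by $C^2_+$-regular convex bodies $K_n$, identify $h^2_{K_n}(p)$ as the integrands in Theorem~\ref{thbdm} to obtain $\Gamma$-convergence of $F_n$ to $F$, deduce convergence of the corresponding minima, and pass to the limit in \eqref{torsion1} and \eqref{eigen} after checking $R^{K_n}_\O\to R^K_\O$. The one place you add something substantive is the torsion: where the paper simply declares the argument ``completely analogous,'' you explicitly recast $T^K(\O)=-2\min_{u\in W^{1,2}_0(\O)}\bigl(\tfrac12\int_\O h^2_K(\nabla u)\,dx-\int_\O u\,dx\bigr)$, which is a correct unconstrained reformulation (at the minimizer $v$ one has $\int_\O h_K^2(\nabla v)=\int_\O v$, so the minimum equals $-\tfrac12\int_\O v=-\tfrac12 T^K(\O)$), and the linear term $\int_\O u\,dx$ is indeed a continuous perturbation, so $\Gamma$-convergence plus equi-coercivity gives $T^{K_n}(\O)\to T^K(\O)$ directly; your extra care with the convergence of the polar support functions and the inradii simply fills in steps the paper takes for granted.
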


\begin{proof}
We limit ourselves to prove \eqref{eigen}, being the proof of \eqref{torsion1} completely analogous. Let $K_n$ be a sequence of $C_+^2$-regular convex bodies converging to $K$ with respect to the Hausdorff metric (the existence of such a sequence is an old theorem by Minkowski, but we refer to \cite{Sc} for a shorter proof), and let 
$$F_n(u)=\int_\O h^2_{K_n}(\nabla u)\,dx\;,$$
$u\in X$, where $X$ is the functional space made of all functions in $W_0^{1,2}(\O)$, with $L^2$ norm equal to 1. The minimizers of the functionals $F_n$, say $u_n$, satisfy
\be\label{approx}
\frac{\pi^2}{4}(R_\O^{K_n})^{-2}\le F_n(u_n)\le j_0^2 (R_\O^{K_n})^{-2}.
\ee
Notice that $R_\O^{K_n}\to R_\O^K$. Moreover, since $h_{K_n}\to h_K$ and since $h_K(v)\le c\|v\|$, for some positive constant $c$, then the second inequality in \eqref{approx} gives the equi-boundedness of the minimizers $u_n$ in $W^{1,2}$, that ensures the existence, up to extracting a subsequence, of a weak limit, $\overline u$.

Since the functionals $F_n$ satisfy the assumptions of Theorem \ref{thbdm}, we have that $F(u)=\int_\O h^2_K(\nabla u)\,dx$ is the $\Gamma$-limit of the functionals $F_n$ and, since minimizers converge to minimizers, we obtain
$$\lambda_1^K(\O)=\min F(u)=F(\overline u)=\lim F_n(u_n).$$
Since both the right-hand side and the left-hand side of \eqref{approx} are converging, it is possible to pass to the limit those inequalities to find the desired result.
\end{proof}

\ack
The work of the first and second authors is part of the project 2015PA5MP7 {\it``Calcolo delle Variazioni''} funded by the Italian Ministry of Research and University. The third author is supported by the MIUR SIR-grant {\it``Geometric
Variational Problems''} (RBSI14RVEZ). The authors are members of the Gruppo Nazionale per l'Analisi Matematica, la Probabilit\`a e le loro Applicazioni (GNAMPA) of the Istituto Nazionale di Alta Matematica (INdAM).
The authors wish to thank the anonymous referee for several suggestions and remarks.


\end{document}